\documentclass[a4paper,12pt]{amsart}

\usepackage{float}
\usepackage{euscript,eufrak,verbatim}
\usepackage{graphicx}
\usepackage{amscd,pifont}
\usepackage[usenames]{color}
\usepackage[colorlinks,linkcolor=red,anchorcolor=blue,citecolor=blue]{hyperref}
\usepackage{amsmath}
\usepackage{amsthm}
\usepackage[all]{xy}
\usepackage{graphicx} 

\setlength{\parskip}{0.5\baselineskip}

\newtheorem{thm}{Theorem}[section]

\newtheorem{lem}[thm]{Lemma}
\newtheorem{cor}[thm]{Corollary}
\newtheorem{que}[thm]{Question}

\newtheorem{conj}[thm]{Conjecture}

\newtheorem{red}[thm]{Reduction}

\def\N{\mathbb{N}}

\def\dimB{{\rm dim~}}

\def\dimLow{\underline{\rm dim}}
\def\dimE{{\rm dim}_{\text{e}}}
\def\Z{\mathbb{Z}}
\def\N{\mathbb{N}}
\def\R{\mathbb{R}}

\makeatletter 
\@addtoreset{equation}{section}
\makeatother  

\newcommand{\norm}[1]{\left\lVert#1\right\rVert}

\title[On dimensions of frame spectral measures and their frame spectra]{On dimensions of frame spectral measures and their frame spectra}
\author{Ruxi Shi
}
\address{Institute of Mathematics, Polish Academy of Sciences, ul. \'Sniadeckich 8, 00-656 Warszawa, Poland}
\email{rshi@impan.pl}
\keywords{Beurling dimension, entropy dimension, frame spectral measure, frame spectrum}

\subjclass[2010]{28A78, 42C05, 42B05, 94A17}

\begin{document}
	
	\maketitle


\begin{abstract}
In this paper, we prove that the entropy dimension of a frame spectral measure is superior than or equal to the Beurling dimension of its frame spectrum.
\end{abstract}

\section{Introduction}

A set $\Lambda$ in a Hilbert space $\mathcal{H}$ is called a \textit{frame} if there exist two constants $A,B>0$ such that for every $f\in \mathcal{H}$, we have
\begin{equation}\label{eq:frame}
A\norm{f}^2\le \sum_{\lambda\in \Lambda}|\langle f, \lambda \rangle|^2\le B \norm{f}^2,
\end{equation}
where $\langle \cdot, \cdot \rangle$ is the inner product in $\mathcal{H}$. The constants $A$ and $B$ are called \textit{lower} and \textit{upper bounds} of the frame. Moreover, if only the upper bound hold in (\ref{eq:frame}), then we call $\Lambda$ a \textit{Bessel set} or \textit{Bessel sequence} in $\mathcal{H}$. It is not hard to see that frame is a natural generalization of orthonormal basis  (where $A = B = 1$). If we restrict $\mathcal{H}=L^2(\mu)$ for some Borel measure $\mu$ on a locally compact abelian group $G$, then $\mu$ is called a \textit{frame spectral measure} with \textit{frame spectrum} $\Lambda$ if (\ref{eq:frame}) holds and $\Lambda$ is contained in the dual group $ \widehat{G}$, and furthermore called \textit{a spectral measure} if $A=B=1$.

The notion of frame was introduced by Duffin and Schaeffer \cite{DufSch1952} in the context of nonharmonic Fourier series. Frames provide robust, basis-like (but non-unique) representations of vectors in a
Hilbert space. The potential redundancy of frames often allows one to construct them more easily than bases, and to get better properties than those that are achievable using bases. Nowadays, frames have various applications in a wide range of areas. However, few properties of frame spectral measures are known. In this paper, we are interested in the relation between the dimensions of the frame spectral measure and its spectrum. It is believed that the ``dimension" of the frame spectral measure should control the ``dimension" of its spectrum. But only the case when the measure is self-similar was established (\cite{DutHanSunWeb2011,HeKanTanWu2018}).  In such case, the frame spectral measure is exact dimensional and its Hausdorff dimension controls the beurling dimension (see the definition in Section \ref{Sec:dimension of measure}) of its frame spectrum. In general, it was conjectured in \cite{HeKanTanWu2018} that

\begin{conj}\label{conj 1}
	If $\mu$ is a frame spectral measure with spectrum $\Lambda$ and compact support $T$ then $\dimB \Lambda\le \dim_H T$.
\end{conj}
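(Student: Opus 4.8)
The plan is to prove the equivalent inequality $\dimH T\ge \dimB\Lambda$ by establishing a Frostman-type \emph{upper} regularity estimate for $\mu$ itself and then invoking the mass distribution principle. Concretely, I aim to show that for every $s<\dimB\Lambda$ there are a constant $C=C(A,B,s)$ and arbitrarily small scales $r$ with
$\mu\big(B(x,r)\big)\le C\,r^{s}$ for all $x$. Granting this, Frostman's lemma forces $\mathcal H^{s}(T)\ge \mu(T)/C>0$, whence $\dimH T\ge s$, and letting $s\uparrow\dimB\Lambda$ gives the conjecture. In this way the problem is converted into a small-ball mass bound for $\mu$, and the frame spectrum enters only through its counting function at the dual scale $1/r$.

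The engine for the per-ball estimate is the upper (Bessel) bound in (\ref{eq:frame}) tested against modulated indicators. Fix a ball $B(x_0,r)$ with $\mu(B(x_0,r))>0$, fix a frequency $\xi$, and set $f=e_\xi\mathbf 1_{B(x_0,r)}$ with $e_\eta(x)=e^{2\pi i\langle\eta,x\rangle}$, so that $\norm{f}_{L^2(\mu)}^2=\mu(B(x_0,r))$. For every $\lambda$ with $|\lambda-\xi|\le 1/(6r)$ the phase of $e^{2\pi i\langle\xi-\lambda,x\rangle}$ varies by at most $\pi/3$ across $B(x_0,r)$, so $|\langle f,e_\lambda\rangle|=\big|\int_{B(x_0,r)} e^{2\pi i\langle\xi-\lambda,x\rangle}\,d\mu\big|\ge\tfrac12\mu(B(x_0,r))$. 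Feeding only these terms into the Bessel inequality gives $\tfrac14\mu(B(x_0,r))^2\,\#\big(\Lambda\cap B(\xi,1/(6r))\big)\le B\,\mu(B(x_0,r))$, that is
\begin{equation*}
\mu\big(B(x_0,r)\big)\le \frac{4B}{\#\big(\Lambda\cap B(\xi,1/(6r))\big)}\qquad\text{for every }x_0\text{ and every }\xi .
\end{equation*}
The right-hand side is independent of the centre $x_0$, so optimising over $\xi$ yields the \emph{uniform} bound $\sup_{x}\mu(B(x,r))\le 4B\big/\sup_{\xi}\#\big(\Lambda\cap B(\xi,1/(6r))\big)$. It remains to insert the definition of the Beurling dimension: for $s<\dimB\Lambda$ the normalised maximal counting function is unbounded, so there is a sequence $h_n\to\infty$ with $\sup_\xi\#\big(\Lambda\cap B(\xi,h_n)\big)\ge h_n^{s}$, and taking $r_n=1/(6h_n)$ gives exactly $\mu(B(x,r_n))\le C\,r_n^{s}$ for all $x$, the desired Frostman estimate along the scales $r_n$.

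The main obstacle is that this estimate is only available along the (possibly lacunary) sequence $r_n$ produced by the $\limsup$ defining the \emph{upper} Beurling dimension, whereas the mass distribution principle requires $\mu(B(x,r))\le Cr^{s}$ for \emph{all} small $r$: a cover of $T$ realising $\mathcal H^{s}$ may use diameters lying in the gaps of $\{r_n\}$, where no bound is known. Note that the argument as stated already yields unconditionally the lower-dimension inequality $\dimH T\ge\dimLow\Lambda$, since for $s<\dimLow\Lambda$ the maximal counting function exceeds $h^{s}$ for \emph{all} large $h$, so the bound $\mu(B(x,r))\le Cr^{s}$ holds at every small scale and Frostman's lemma applies directly. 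Upgrading to the full upper dimension therefore hinges on controlling $\mu$ across the spectral gaps, for which the natural tool is the \emph{lower} frame bound in (\ref{eq:frame}): it forces the characters $\{e_\lambda\}$ to be rich enough to reconstruct every element of $L^2(\mu)$, and applied to $f=e_\xi\mathbf 1_{B(x_0,r)}$ it gives $A\,\mu(B(x_0,r))\le\sum_\lambda|\widehat{\mu|_{B(x_0,r)}}(\lambda-\xi)|^2$, a lower bound on the local count \emph{provided} the far contribution $\sum_{|\lambda-\xi|>1/(6r)}|\widehat{\mu|_{B(x_0,r)}}(\lambda-\xi)|^2$ can be absorbed. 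Since that tail estimate is governed by the decay of $\widehat{\mu|_{B(x_0,r)}}$, which encodes precisely the regularity of $\mu$ we are trying to prove, making it scale-uniform is the crux of the difficulty; either this or a refined mass distribution principle valid along sparse scales would complete the argument.
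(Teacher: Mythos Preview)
Your proposal attempts to \emph{prove} Conjecture~\ref{conj 1}, but the paper \emph{disproves} it. In Section~\ref{sec:Further discussion} the author constructs explicit spectral measures $\nu_I$ on $[0,1]$ (indexed by $I\subset\N$) whose spectra $\Lambda_I$ satisfy
\[
\dimB \Lambda_I \;=\; \limsup_{n\to\infty}\frac{\sharp I_n}{n}
\quad\text{while}\quad
\dim_H \mathrm{supp}(\nu_I)\;=\;\liminf_{n\to\infty}\frac{\sharp I_n}{n},
\]
and choosing $I$ so that the $\limsup$ strictly exceeds the $\liminf$ gives $\dimB\Lambda_I>\dim_H\mathrm{supp}(\nu_I)$. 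Thus no proof of the conjecture can exist, and the obstacle you isolate is not a technicality but the actual failure point.

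Indeed, your analysis is sharp in the following sense. The per-ball estimate $\mu(B(x_0,r))\le 4B/\sup_\xi\sharp(\Lambda\cap B(\xi,1/(6r)))$ is correct and is essentially the same mechanism the paper uses (compare Lemmas~\ref{lem:change measure} and~\ref{lem:fourier transform}). Your observation that this yields a Frostman bound only along the lacunary scales $r_n$ coming from the $\limsup$ in the Beurling dimension, and that this suffices for $\dim_H T\ge\dimLow\Lambda$ but not for the upper Beurling dimension, is exactly right. The counterexamples $\nu_I$ are built precisely so that $\Lambda_I$ has large counts only at a sparse sequence of scales, while at the intermediate scales the support of $\nu_I$ can be covered efficiently enough to keep its Hausdorff dimension small. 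Your proposed rescue via the lower frame bound cannot work uniformly, because in these examples the tail $\sum_{|\lambda-\xi|>1/(6r)}|\widehat{\mu|_{B(x_0,r)}}(\lambda-\xi)|^2$ genuinely carries most of the mass at the bad scales.

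What the paper proves instead is Theorem~\ref{main thm 1}: $\dimB\Lambda\le\overline{\dimE}\,\mu$. The upper entropy dimension is itself a $\limsup$ over scales, so it matches the $\limsup$ nature of the Beurling dimension; the proof compares the two quantities scale by scale via the same Bessel-bound mechanism you derived, but averages the bound $\mu(D)^{-1}$ geometrically over all dyadic cells $D$ at the matching level rather than passing through a Frostman/Hausdorff argument.
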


In this paper, we disprove Conjecture \ref{conj 1}, which means that the Hausdorff dimension is not a candidate that controls the Berling dimension in general. Instead, we prove a similar version of Conjecture \ref{conj 1} by replacing Hausdorff dimension by upper entropy dimension (see its definition in Section \ref{Sec:Beurling dimension of countable sets}). More precisely, supposing that $\mu$ is a frame spectral measure with frame spectrum $\Lambda$, we show that the Beurling dimension  of the frame spectrum $\Lambda$ is not superior than the upper entropy dimension of the frame spectral measure $\mu$, which allows one to see that the Beurling dimension  and the upper entropy dimension  are the proper notions of ``dimensions" which are described above.  Now we state our main result.

\begin{thm}\label{main thm 1}
	Let $\mu$ be a Borel measure on $\R^d$. Suppose that $\mu$ is a frame spectral measure with frame spectrum $\Lambda$. Then we have
	$$
	\dimB \Lambda\le \overline{\dimE}~ \mu,
	$$
	where $\dimB \Lambda$ is the Beurling dimension of $\Lambda$ and $\overline{\dimE}~ \mu$ is the upper entropy dimension of $\mu$.
\end{thm}

The following theorem is formally stronger than  but actually equivalent to Theorem \ref{main thm 1}.

\begin{thm}\label{main thm 2}
	Let $\mu$ be a Borel measure on $\R^d$. Suppose that $\mu$ is a frame spectral measure with frame spectrum $\Lambda$. Then we have
	$$
	\dimB \Lambda\le \inf_{\mu(K)>0, \mu(\partial K)=0}\overline{\dimE}~ \mu_K.
	$$
	where $\mu_K$ is the measure $\mu$ restricted on the Borel set $K$.
\end{thm}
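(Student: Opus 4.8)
The plan is to estimate the Beurling dimension of $\Lambda$ by counting, for each fixed small $r$, the maximal number of points of $\Lambda$ in any ball of radius $r$, and to tie this count to the $L^2(\mu_K)$ geometry of the exponential functions $e_\lambda(x) = e^{2\pi i \langle \lambda, x\rangle}$. Fix a compact set $K$ with $\mu(K)>0$ and $\mu(\partial K)=0$; since restricting $\mu$ to $K$ only shrinks the Hilbert space $L^2(\mu)$ to $L^2(\mu_K)$, the family $\{e_\lambda : \lambda\in\Lambda\}$ remains a Bessel sequence in $L^2(\mu_K)$ with the same upper bound $B$ (the lower frame bound is what is lost, but we will not need it). The first step is therefore the standard observation: if $\{e_\lambda\}_{\lambda\in\Lambda}$ is Bessel in $L^2(\mu_K)$ with bound $B$, then for any finite subset $F\subset\Lambda$ and any $g\in L^2(\mu_K)$,
\begin{equation}\label{eq:bessel-finite}
\sum_{\lambda\in F}|\langle g, e_\lambda\rangle_{\mu_K}|^2 \le B\,\norm{g}_{L^2(\mu_K)}^2.
\end{equation}

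The second, and central, step is to choose a good test function $g$ that makes the left side of \eqref{eq:bessel-finite} large when $F$ is a cluster of points of $\Lambda$ in a ball $B(\xi, r)$. The natural choice is a smooth bump $\varphi_r$ concentrated on a cube of side $\sim 1/r$ (a rescaled Fejér-type kernel), so that $\widehat{\varphi_r}$ is essentially supported in $B(0,r)$ and, for $\lambda,\lambda'\in B(\xi,r)$, the inner products $\langle \varphi_r e_{-\xi}, e_\lambda\rangle_{\mu_K}$ are nearly $\widehat{\varphi_r\,\mu_K}(\lambda-\xi)$, which are bounded below. Concretely, one takes $g = \varphi_r \cdot e_{-\xi}$ where $\varphi_r$ is a suitable Gaussian or Fejér modulation adapted to scale $1/r$; then $\norm{g}_{L^2(\mu_K)}^2 \le \norm{\varphi_r}_\infty^2\,\mu(K)$ while each term $|\langle g,e_\lambda\rangle_{\mu_K}|^2 \ge c$ for some constant $c=c(K,\mu)>0$ depending on the local behaviour of $\mu$ near a point in $K$ of positive density. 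Plugging into \eqref{eq:bessel-finite} yields
$$
\#\bigl(\Lambda\cap B(\xi,r)\bigr)\;\le\; \frac{B\,\norm{\varphi_r}_\infty^2\,\mu(K)}{c}.
$$
The delicate point is that $\norm{\varphi_r}_\infty$ and the lower bound $c$ must be controlled uniformly in $\xi$; to make $c$ uniform one cannot use a single point of positive density but must instead average, or cover $K$ by finitely many pieces on which $\mu$ has comparable mass — this is exactly where the entropy dimension of $\mu_K$ enters. More precisely, partition $\R^d$ into a grid of cubes of side $1/r$; the number of such cubes meeting $\operatorname{supp}\mu_K$ with $\mu_K$-mass at least $\varepsilon$ of a threshold is governed by the covering numbers appearing in the definition of $\overline{\dimE}\,\mu_K$, and one should run the bump-function estimate cube-by-cube, summing the contributions. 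The hypothesis $\mu(\partial K)=0$ guarantees that the grid boundaries carry no mass, so the cube-wise decomposition loses nothing.

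The third step is the bookkeeping that converts the per-ball counting bound into the dimension inequality. By definition, $\dimB\Lambda$ is the infimum of $s\ge 0$ such that $\sup_{\xi}\#(\Lambda\cap B(\xi,r)) = O(r^{-s})$ as $r\to 0$ (with the appropriate Beurling formulation from Section~\ref{Sec:Beurling dimension of countable sets}). From the cube-wise version of the estimate above, $\sup_\xi \#(\Lambda\cap B(\xi,r))$ is bounded by a constant times the relevant $(1/r)$-covering number of $\mu_K$ times a fixed power of $r$ coming from $\norm{\varphi_r}_\infty^2$; taking $\log$, dividing by $\log(1/r)$, and letting $r\to0$ converts this into the statement that $\dimB\Lambda$ is at most the upper limit of $\frac{\log N(\mu_K, r)}{\log(1/r)}$ plus the fixed exponent, and a careful choice of the normalisation of $\varphi_r$ (scaling its $L^2(dx)$-norm rather than its sup-norm) removes the spurious fixed exponent, leaving exactly $\overline{\dimE}\,\mu_K$. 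Taking the infimum over admissible $K$ gives Theorem~\ref{main thm 2}, and Theorem~\ref{main thm 1} follows since $\overline{\dimE}\,\mu_K \le \overline{\dimE}\,\mu$ for $\mu$-nice $K$, or directly by taking $K$ large.

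The main obstacle I anticipate is the uniformity in $\xi$ of the lower bound on $|\langle g, e_\lambda\rangle_{\mu_K}|^2$: a naive bump centred at a density point of $\mu$ only controls clusters of $\Lambda$ near that one point, whereas the Beurling dimension requires controlling clusters near every $\xi\in\R^d$. Resolving this forces the cube-by-cube argument and a quantitative control — uniform in the location of the cube — of how much $\mu_K$-mass a smooth bump of width $1/r$ can capture, which is precisely the content of the entropy-dimension covering numbers; getting the constants to line up so that no extra factor of $r$ survives in the exponent is the technical heart of the proof.
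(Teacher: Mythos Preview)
Your overall strategy --- use the Bessel upper bound with test functions that are exponentials localized on small physical-space cubes --- is the right one and is what the paper does. But there are two genuine gaps.

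First, a direction issue: the Beurling dimension of $\Lambda$ is defined via $\limsup_{h\to\infty}\sup_\xi \#(\Lambda\cap B(\xi,h))/h^s$, so you must count points of $\Lambda$ in \emph{large} balls $h\to\infty$, not small $r\to 0$; correspondingly the physical-space bump lives on a cube of side $\sim 1/h\to 0$. Your text oscillates between the two conventions, and the formulation ``$\sup_\xi\#(\Lambda\cap B(\xi,r))=O(r^{-s})$ as $r\to 0$'' is meaningless for a discrete $\Lambda$.

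Second, and more seriously, you are missing the step that actually produces the \emph{entropy} dimension rather than a covering/box-counting number. The per-cube Bessel estimate (with test function $e_t$ on a cube $D\in\mathcal{D}_n$, then rescaling $D$ to $[0,1]^d$) gives, for every $D$ with $\mu(D)>0$,
\[
\#\bigl(\Lambda\cap B(t,h)\bigr)\;\le\;\frac{B}{\epsilon^{2}}\cdot\frac{1}{\mu(D)},
\]
and this is already uniform in $t$: once $|t-\lambda|/2^{n+\rho}<\delta$ one has $|\widehat{\mu_D^{\Box}}((t-\lambda)/2^{n+\rho})|>\epsilon$ for \emph{every} cube $D$, so no density-point argument is needed and your ``main obstacle'' does not arise. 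The crucial move is then to take the \emph{weighted geometric mean} of these bounds with weights $\mu(D)$:
\[
\#\bigl(\Lambda\cap B(t,h)\bigr)
=\prod_{\mu(D)>0}\bigl(\#(\Lambda\cap B(t,h))\bigr)^{\mu(D)}
\le B\epsilon^{-2}\prod_{\mu(D)>0}\mu(D)^{-\mu(D)}
= B\epsilon^{-2}\,2^{H(\mu,\mathcal{D}_n)}.
\]
This identity is the bridge from the cubewise bounds to the Shannon entropy $H(\mu,\mathcal{D}_n)$, and hence to $\overline{\dim}_e\,\mu_K$. Your proposal instead invokes ``covering numbers'' $N(\mu_K,r)$ and ``summing the contributions'', which at best yields the upper box dimension of the support (recall $H(\mu,\mathcal{D}_n)\le\log_2 N_n$, with equality only for uniform $\mu$), a strictly weaker conclusion. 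No renormalisation of the bump will turn a box-counting bound into an entropy bound; the geometric-mean trick is the missing idea and should replace your third step.
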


In fact, Theorem \ref{main thm 1} is equivalent to Theorem \ref{main thm 2} as follows: the necessity is trivial, and the sufficiency follows from Lemma \ref{lem:restriction} which states that if $\mu$ is a frame spectral measure then $\mu_K$ is also a frame spectral measure for every Borel set $K$ satisfying $\mu(\partial K)=0$. 

In general, we can not expect that the equality holds in Theorem \ref{main thm 1} or Theorem \ref{main thm 2}. We refer to the examples in \cite{DaiLaiHe2013} where a class of singular continuous measures are constructed, satisfying that the Beurling dimensions of their spectra are zero but their entropy dimensions are strictly positive.  

Even though Theorem \ref{main thm 2} is stated for frame spectral measures, we remark that only the upper bound in (\ref{eq:frame}) plays a role in the proof of Theorem \ref{main thm 2}. In other words, if a Borel measure $\mu$ has a Bessel sequence $\Lambda$, then Theorems \ref{main thm 1} and \ref{main thm 2} hold for $\mu$ and $\Lambda$.

We organize our paper as follows. In Section \ref{Sec:Preliminaries}, we recall several definitions of different dimensions, including Hausdorff dimension, entropy dimension, etc. In order to prove Theorem \ref{main thm 1} in Section \ref{Sec:Proof of main result}, we make some reduction of Theorem \ref{main thm 1} in Section \ref{Sec:Reduction of the main result}. In Section \ref{sec:Further discussion}, we discuss the relation between Beurling dimension with other dimensions.
Finally, in Section \ref{Sec:Potential examples}, we show some application of our main result.



\section{Preliminaries}\label{Sec:Preliminaries}

In this section, we recall several definitions of different dimensions.

\subsection{Dyadic partitions}
We first define the $n$-th dyadic partition of $\R$ by
$$
\mathcal{D}_n^{(1)}:=\left\{ \left[ \frac{k}{2^n}, \frac{k+1}{2^n} \right): k\in \Z  \right\}.
$$
The $n$-th dyadic partition of $\R^d$ is then defined by
$$
\mathcal{D}_n^{(d)}:=\left\{ I_1\times I_2\times \dots \times I_d: I_j\in \mathcal{D}_n^{(1)}  \right\}.
$$
If there is no confusion, we usually omit the superscript and write $\mathcal{D}_n$ for the $n$-th dyadic partition of $\R^d$.

\subsection{Dimensions of measure}\label{Sec:dimension of measure}
Let $(X, \mathcal{B}, \mu)$ be a probability space. Let $\mathcal{A}$ be a partition of $X$. The \textit{Shannon entropy of $\mu$ with respect to $\mathcal{A}$} is defined by
$$
H(\mu,\mathcal{A})=\sum_{A\in \mathcal{A}}-\mu(A)\log \mu(A).
$$
By convention the logarithm is taken in base $2$ and $0\log 0=0$. If the partition $\mathcal{A}$ is infinite, then the entropy $H(\mu, \mathcal{A})$ may be infinite.

Recall that $\mathcal{D}_n$ is the dyadic partition of $\R^d$ with diameter $2^{-n}$. The \textit{entropy dimension} of $\mu$ is defined by the formula
$$
\dimE~ \mu=\lim\limits_{n\to \infty} \frac{1}{n} H(\mu, \mathcal{D}_n),
$$
if the limit exists (otherwise we take limsup or liminf as appropriate, denoted by $\overline{\dim}_e~ \mu$ and $\underline{\dim}_e~ \mu$ respectively).


The \textit{lower Hausdorff dimension} of $\mu$ is defined by
$$
\dimLow_H~ \mu=\inf\{\dim_H A: \mu(A)>0 \},
$$
and the \textit{upper Hausdorff dimension} of $\mu$ is defined by
$$
\overline{\dim}_H~ \mu=\inf\{\dim_H A: \mu(A)=1 \}.
$$
Here $\dim_H A$ is the Hausdorff dimension of $A$. In what follows, the open ball of radius
$r$ centered at $x$ is denoted by $B(x,r)$.
A measure $\mu$ is \textit{exact dimensional} if the local dimension
$$
\lim\limits_{r\to 0} \frac{\log \mu(B(x,r))}{\log r}
$$
exists and is $\mu$-a.e. constant, which is denoted by $\dim \mu$. 
Here and in what follows, we denote by  $\mathcal{P}(\R^d)$ the space of probability measures on $\R^d$. The following lemma is well-known and its proof can be found, for example, in \cite[Theorem 1.1]{FanLauRao2002}.
\begin{lem}
	If $\mu\in \mathcal{P}(\R^d)$  is exact dimensional, then $\dimE ~\mu$ exists and is equal to $\dim \mu$.
\end{lem}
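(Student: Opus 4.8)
Write $\alpha := \dim\mu$. The plan is to prove $\underline{\dim}_e\,\mu \ge \alpha$ and $\overline{\dim}_e\,\mu \le \alpha$ separately; together these give both the existence of $\dimE\,\mu$ and its value. Both parts start from the identity $H(\mu,\mathcal D_n) = \int_{\R^d} -\log\mu(\mathcal D_n(x))\,d\mu(x)$, where $\mathcal D_n(x)$ denotes the cube of $\mathcal D_n$ containing $x$, so that $\tfrac1n H(\mu,\mathcal D_n) = \int f_n\,d\mu$ with $f_n(x) := -\tfrac1n\log\mu(\mathcal D_n(x)) \ge 0$.

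For the lower bound, I would use the inclusion $\mathcal D_n(x) \subseteq B(x,\sqrt d\,2^{-n})$, which gives $f_n(x) \ge -\tfrac1n\log\mu(B(x,\sqrt d\,2^{-n}))$. Exact dimensionality says $\log\mu(B(x,r))/\log r \to \alpha$ for $\mu$-a.e.\ $x$, and putting $r = \sqrt d\,2^{-n}$ turns this into $-\tfrac1n\log\mu(B(x,\sqrt d\,2^{-n})) \to \alpha$ a.e.; hence $\liminf_n f_n(x) \ge \alpha$ for $\mu$-a.e.\ $x$. Since $f_n \ge 0$, Fatou's lemma gives $\underline{\dim}_e\,\mu = \liminf_n \int f_n\,d\mu \ge \int \alpha\,d\mu = \alpha$.

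The hard part is $\overline{\dim}_e\,\mu \le \alpha$. One cannot bound $\mu(\mathcal D_n(x))$ from below pointwise: $x$ may lie on or near $\partial\mathcal D_n(x)$, and the mass of $\mu$ in a small ball about $x$ may concentrate in an adjacent cube. Instead I would use the concavity of $\varphi(t) = -t\log t$ together with a box-counting estimate on a set of almost full measure. Fix $\epsilon > 0$; exact dimensionality lets one choose $r_0 > 0$ and a bounded set $E$ with $\mu(E) > 1-\epsilon$ such that $\mu(B(x,r)) \ge r^{\alpha+\epsilon}$ for all $x \in E$ and $r < r_0$. A maximal $r$-separated subset of $E$ then has disjoint balls $B(x_i,r/2)$, so its cardinality is $O(r^{-(\alpha+\epsilon)})$ once $r/2 < r_0$, and since the balls $B(x_i,r)$ cover $E$ this gives $\#\{Q \in \mathcal D_n : Q \cap E \neq \emptyset\} \le C_d\,2^{n(\alpha+\epsilon)}$ for all large $n$. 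Now split $H(\mu,\mathcal D_n) = \sum_{Q\cap E\neq\emptyset}\varphi(\mu(Q)) + \sum_{Q\cap E=\emptyset}\varphi(\mu(Q))$. Applying Jensen's inequality to $\varphi$ on each sum: the first is $\le \log\#\{Q : Q\cap E\neq\emptyset\} + O(1) \le n(\alpha+\epsilon) + O_d(1)$; the second involves cubes of total mass $< \epsilon$ whose number is at most $(2R\cdot 2^n)^d$ when $\operatorname{supp}\mu \subseteq [-R,R]^d$, hence is $\le \epsilon\,nd + O(1)$. Dividing by $n$, letting $n\to\infty$, and then $\epsilon\to 0$ yields $\overline{\dim}_e\,\mu \le \alpha$.

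The box-counting step and the two Jensen estimates are, I expect, the only nonroutine points; everything else is bookkeeping. One caveat is that $H(\mu,\mathcal D_n)$ can be infinite for measures with a fat tail, so the bound on the ``far'' cubes above really uses compact support (or some tail condition); this is not a restriction for the applications in this paper, where the relevant measures are compactly supported. Combining the two inequalities, $\lim_n \tfrac1n H(\mu,\mathcal D_n)$ exists and equals $\alpha = \dim\mu$, as claimed.
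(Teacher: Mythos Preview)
The paper does not prove this lemma at all; it simply cites \cite{FanLauRao2002}. So there is no in-paper argument to compare yours against, and what follows is an assessment of your proof on its own.

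Your argument is correct and is essentially the standard one. The lower bound via $\mathcal D_n(x)\subseteq B(x,\sqrt d\,2^{-n})$ and Fatou is clean. For the upper bound, the combination of an Egorov-type uniformisation, the separated-net box-counting estimate, and the concavity bound $\sum_{i=1}^N \varphi(t_i)\le S\log N - S\log S$ (with $S=\sum t_i$) is exactly what one finds in the literature; your splitting into cubes meeting $E$ and cubes missing $E$ and the two Jensen bounds are carried out correctly.

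Your caveat about compact support is not merely a technicality: the lemma as literally stated for all $\mu\in\mathcal P(\R^d)$ is false. Take $\mu=\sum_{k\ge 2} p_k\,\lambda_{[k,k+1]}$ on $\R$ with $p_k\asymp 1/(k\log^2 k)$; then $\mu$ is absolutely continuous, hence exact dimensional with $\dim\mu=1$, but a direct computation gives $H(\mu,\mathcal D_n)=H((p_k))+n=\infty$ for every $n$, so $\dim_e\mu$ does not exist (or is $+\infty$). Thus some tail or support hypothesis is genuinely needed, and your remark that the applications in the paper concern compactly supported measures is the right way to handle this. You might state this more forcefully: your upper-bound step is valid precisely when the ``bad'' cubes can be controlled, e.g.\ under compact support, and without such a condition the conclusion can fail.
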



\subsection{Beurling dimension of countable sets}\label{Sec:Beurling dimension of countable sets}
Let $\Lambda$ be a countable set in $\R^d$. For $r>0$, the \textit{upper Beurling density corresponding to $r$} (or \textit{$r$-Beurling density}) of $\Lambda$ is defined by the formula
$$
\mathfrak{D}_r^+:=\limsup_{h\to \infty} \sup_{x\in \R^d} \frac{\sharp(\lambda \cap B(x,h))}{h^r}.
$$
The \textit{(upper) Beurling dimension} of $\Lambda$ is defined by
$$
\dim\Lambda=\sup\{r>0:\mathfrak{D}_r^+(\Lambda)>0 \},
$$
or alternatively,
$$
\dim\Lambda=\inf\{r>0:\mathfrak{D}_r^+(\Lambda)<+\infty \}.
$$
A basic property of Beurling dimension is that $\dim s\Lambda=\dim \Lambda$ for all $s\in \R\setminus\{0\}$.


\section{Reduction of the main result}\label{Sec:Reduction of the main result}

In this section, our goal is to make some reduction of Theorem \ref{main thm 1}.

The following lemma is a direct consequence of the definition of frame spectral measures. We omit the proof and leave the readers to work out the details.
\begin{lem}\label{lem:basic properties}
	Let $\mu$ be a frame spectral measure in $\R^d$ with spectrum $\Lambda$ and frame bounds $0<A\le B<\infty$. Then we have the following properties.
	\begin{itemize}
		\item [(1)] For any $v,t\in \R^d$, $\mu(\cdot+v)$ is a frame spectral measure  with spectrum $\Lambda+t$ and frame bounds $A,B$.
		\item [(2)] For any non-zero $c\in \R$, $c\cdot \mu$ is a frame spectral measure with spectrum $\Lambda$ and frame bounds $cA,cB$.
		\item [(3)] For any non-zero $s\in \R$, $\mu(\cdot \times s)$ is a frame spectral measure with spectrum $s\Lambda$ and frame bounds $A,B$.
	\end{itemize}
\end{lem}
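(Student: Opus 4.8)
The plan is to treat all three items by the same mechanism: in each case I produce a linear isometry $U$ from $L^2$ of the new measure into $L^2(\mu)$ which sends the exponential $e_{\lambda'}$, indexed by an element $\lambda'$ of the claimed new spectrum, to a unimodular constant times $e_\lambda$, where $e_\lambda(x):=e^{2\pi i\langle\lambda,x\rangle}$. Since only $|\langle f,e_\lambda\rangle|$ enters the frame inequalities, these constants are irrelevant, and the two-sided estimate for $\mu$ applied to $Uf$ immediately becomes the two-sided estimate for the new measure applied to $f$. With the convention $\langle f,e_\lambda\rangle_\mu=\int f(x)e^{-2\pi i\langle\lambda,x\rangle}\,d\mu(x)$, the hypothesis is $A\norm{f}_{L^2(\mu)}^2\le\sum_{\lambda\in\Lambda}|\langle f,e_\lambda\rangle_\mu|^2\le B\norm{f}_{L^2(\mu)}^2$ for every $f\in L^2(\mu)$.

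For (1), set $\nu:=\mu(\cdot+v)$, so that $\int g\,d\nu=\int g(x-v)\,d\mu(x)$, and define $U\colon L^2(\nu)\to L^2(\mu)$ by $(Uf)(x):=f(x-v)e^{-2\pi i\langle t,x\rangle}$; since $|(Uf)(x)|^2=|f(x-v)|^2$, the map $U$ is an isometry. A one-line change of variables gives $\langle f,e_{\lambda+t}\rangle_\nu=e^{2\pi i\langle\lambda+t,v\rangle}\langle Uf,e_\lambda\rangle_\mu$, hence $|\langle f,e_{\lambda+t}\rangle_\nu|=|\langle Uf,e_\lambda\rangle_\mu|$; summing over $\lambda\in\Lambda$ and applying the frame inequalities for $\mu$ to $Uf$ shows $\nu$ is frame spectral with spectrum $\Lambda+t$ and bounds $A,B$. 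For (2) (say with $c>0$, replacing $c$ by $|c|$ otherwise, since $c\mu$ must be a measure), $L^2(c\mu)$ and $L^2(\mu)$ have the same underlying functions, with $\norm{f}_{L^2(c\mu)}^2=c\norm{f}_{L^2(\mu)}^2$ and $\langle f,e_\lambda\rangle_{c\mu}=c\langle f,e_\lambda\rangle_\mu$; substituting these into the frame inequalities for $\mu$ and rearranging yields the frame inequalities for $c\mu$ with the same spectrum $\Lambda$ and bounds $cA,cB$. For (3), set $\nu:=\mu(\cdot\times s)$, so that $\int g\,d\nu=\int g(x/s)\,d\mu(x)$, and let $(Uf)(x):=f(x/s)$, which is again an isometry $L^2(\nu)\to L^2(\mu)$; the identity $\langle f,e_{s\lambda}\rangle_\nu=\langle Uf,e_\lambda\rangle_\mu$ transfers the two-sided estimate, showing $\nu$ is frame spectral with spectrum $s\Lambda$ and bounds $A,B$.

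There is no real obstacle here: every step is a change of variables, and the only points that need a little care are the bookkeeping of which affine map acts on the measure and which on the functions, the handling of the sign in (2) noted above, and the (automatic) observation that it suffices to have $U$ map $L^2$ of the new measure \emph{into} $L^2(\mu)$ isometrically — surjectivity is not needed, because the frame inequalities for $\mu$ hold on all of $L^2(\mu)$, in particular for every $Uf$.
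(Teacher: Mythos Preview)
Your argument is correct and is precisely the routine change-of-variables verification the paper has in mind; the paper itself omits the proof entirely, stating that it ``is a direct consequence of the definition of frame spectral measures'' and leaving the details to the reader. Your care with the sign of $c$ in (2) and the remark that only the isometric embedding (not surjectivity) of $U$ is needed are both appropriate.
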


We observe that if putting $\mathcal{H}=L^2(\mu)$ and $f=\overline{\lambda}^{-1}$ for some $\lambda\in \Lambda$ in \eqref{eq:frame}, then we have $\mu(\R^d)<\infty$. Since $c\cdot \mu$ is also a frame spectral measure for any $c\in \R$, we might assume that $\mu\in \mathcal{P}(\R^d)$.

It is well known that (\cite{LabaWang2006,HeLaiLau2013}) that if  $\mu$ is a frame spectral measure with frame $\Lambda$, then it has to be of ``pure type".

\begin{thm}[\cite{HeLaiLau2013}, Theorem 1.1, Proposition 2.1]
	Let $\mu$ be a frame spectral measure with frame $\Lambda$. Then
	$\mu$ is either discrete with $\sharp \Lambda<+\infty$, absolutely continuous with $\mathfrak{D}_d^{-}(\Lambda)>0$ or singular continuous with $\mathfrak{D}_d^{-}(\Lambda)=0$.
\end{thm}

  If $\mu$ is discrete, then it has finitely many atoms \cite{HeLaiLau2013}, implying that $\dim \Lambda=\dim_e \mu=0$. If $\mu$ is absolutely continuous, then it is supported on a set of finite Lebesgue measure in $\R^d$, and its density function is bounded from above and from below almost everywhere on the support \cite{Lai2011}. It follows that if $\mu$ is absolutely continuous, then $\dim \Lambda=\dim_e \mu=d$. By the above argument, it is sufficient to prove Theorem \ref{main thm 1} for singular continuous measures.

Let $\mu\in \mathcal{P}(\R^d)$. For a Borel set $K\subset \R^d$, we denote by 
$$
\mu_{K}(\cdot):=\mu(\cdot\cap K),
$$
the measure $\mu$ restricted on $K$. Moreover, if $K$ is a dyadic cube in $[0,1]^d$ with $\mu(K)>0$, we denote by
$$
\mu_{K}^{\Box}(\cdot):=\frac{1}{\mu(K)}(S_K)_*\mu_{K}(\cdot),
$$
where $S_K$ is the affine bijective map from $K$ to $[0,1]^d$ and $(S_K)_*\mu_{K}(\cdot)$ is the pushforward of $\mu_K$, i.e. the measure $\mu_{K}(S_K^{-1}(\cdot))$. Obviously, we have $\mu_{K}^{\Box}\in \mathcal{P}([0,1]^d)$.

The following lemma provides that the restriction of a frame spectral measure is also a frame spectral measure. A general version of the following lemma can be found in \cite{FuLai2018}. We include the proof here for completeness.
\begin{lem}\label{lem:restriction}
	Let $\mu$ be a frame spectral measure on $\R^d$. Let $K\subset \R^d$ be a Borel subset satisfying that $\mu(\partial K)=0$. Then $\mu_{K}$ is also a frame spectral measure having the same spectrum and frame bounds with the measure $\mu$.
\end{lem}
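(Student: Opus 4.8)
The plan is to realize $L^2(\mu_K)$ as a closed subspace of $L^2(\mu)$ and then invoke the elementary principle that compressing a frame to a closed subspace via the orthogonal projection produces a frame for that subspace with the \emph{same} bounds. The only real content will be to verify that this compression sends each character $e_\lambda$ to the copy of its restriction to $K$, so that the compressed frame is exactly the exponential system attached to $\mu_K$; this is what yields the assertion about the \emph{same} spectrum.

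First I would clear away the measurability bookkeeping. Since $\mu(\partial K)=0$, one has $\mu_{K^\circ}=\mu_K=\mu_{\overline K}$ as Borel measures, so without loss of generality $K$ is Borel and $\mu_K$ is a genuine finite Borel measure on $\R^d$. Write $e_\lambda(x):=e^{2\pi i\langle\lambda,x\rangle}$, so that the hypothesis on $\mu$ is precisely that $\{e_\lambda:\lambda\in\Lambda\}$ satisfies \eqref{eq:frame} in $L^2(\mu)$ with bounds $0<A\le B<\infty$. Extension by zero defines a linear isometry $J\colon L^2(\mu_K)\to L^2(\mu)$, $f\mapsto\tilde f$, whose image is the closed subspace $V=\{g\in L^2(\mu):g=0\text{ $\mu$-a.e.\ off }K\}$, and the orthogonal projection of $L^2(\mu)$ onto $V$ is multiplication by $\mathbf 1_K$; in particular it carries $e_\lambda$ to $\mathbf 1_Ke_\lambda=J(e_\lambda|_K)$.

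Then for every $f\in L^2(\mu_K)$ one checks directly that $\langle f,e_\lambda\rangle_{L^2(\mu_K)}=\int_K f\,\overline{e_\lambda}\,d\mu=\langle Jf,e_\lambda\rangle_{L^2(\mu)}$, so that
\[
\sum_{\lambda\in\Lambda}\bigl|\langle f,e_\lambda\rangle_{L^2(\mu_K)}\bigr|^2=\sum_{\lambda\in\Lambda}\bigl|\langle Jf,e_\lambda\rangle_{L^2(\mu)}\bigr|^2\in\bigl[\,A\norm{Jf}_{L^2(\mu)}^2,\ B\norm{Jf}_{L^2(\mu)}^2\,\bigr]
\]
by \eqref{eq:frame} applied to the vector $Jf$; and since $J$ is an isometry, $\norm{Jf}_{L^2(\mu)}=\norm{f}_{L^2(\mu_K)}$. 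As $f$ was arbitrary, this shows that $\{e_\lambda:\lambda\in\Lambda\}$ is a frame for $L^2(\mu_K)$ with bounds $A,B$, i.e.\ $\mu_K$ is a frame spectral measure with spectrum $\Lambda$ and the same frame bounds. The argument is entirely soft, and I do not anticipate a genuine obstacle; the single point needing care is the identification of the compressed vectors with the restricted characters $e_\lambda|_K$ rather than with some other projections. Note finally that only the upper inequality in \eqref{eq:frame} was used to produce the Bessel bound $B$, consistently with the remark following Theorem \ref{main thm 2}.
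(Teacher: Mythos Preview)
Your proof is correct and follows essentially the same approach as the paper's: extend $f\in L^2(\mu_K)$ by zero to an element of $L^2(\mu)$, observe that inner products with $e_\lambda$ and norms are preserved (this is where $\mu(\partial K)=0$ enters), and read off the frame inequalities for $\mu_K$ from those for $\mu$. Your version packages this via the isometry $J$ and the compression-to-a-subspace language, which is a bit more explicit than the paper's but not genuinely different.
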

\begin{proof}
	For any $f\in L^2(\mu_K)$, we extend $f$ into the space $L^2(\mu)$ by taking $f(x)=0$ for any $x$ outside $\overline{K}$. Since $\mu_{K^c}(\overline{K})=0$, we have $\langle f, g \rangle_{\mu_{K^c}}=0$ for all $g\in L^2(\mu)$. It follows that $\langle f, g \rangle_{\mu}=\langle f, g \rangle_{\mu_K}$ for all $g\in L^2(\mu)$, and in particular  $\norm{f}_{\mu}=\norm{f}_{\mu_{K}}$. Thus we conclude that $\mu_{K}$ is also a frame spectral measure and has the same spectrum and frame bounds with the measure $\mu$.
\end{proof}

The following lemma shows that we can find a unit cube satisfying the condition (of $K$) in Lemma \ref{lem:restriction}.

\begin{lem}\label{lem:good point}
	Let $\mu\in \mathcal{P}(\R^d)$. Then there exists a unit cube  $v+[0,1]^d$ for some $v\in \R^d$ such that $\mu(v+[0,1]^d)>0$ and $\mu(\partial( v+[0,1]^d))=0$.
\end{lem}
\begin{proof}
	Since $\mu\in \mathcal{P}(\R^d)$, there exists $u\in \R^d$ such that $\mu(u+[0,1/2]^d)>0$. Let $u_t=u-(t,t,\dots, t)$. Since $u_t+[0,1]^d$ contains $u+[0,1/2]^d$ for any $t\in [0, d^{1/2}/2]$, we see that $\mu(u_t+[0,1]^d)>0$. Observe that $\partial[0,1]^d=A\cup B$ where $A=\{x=(x_i)_{1\le i\le d}: x_i\in [0,1], \exists x_j=0\}$ and $B=\{x=(x_i)_{1\le i\le d}: x_i\in [0,1], \exists x_j=1\}$. Moreover, it is easy to check that $(u_t+A)\cap (u_s+A)=\emptyset$ and $(u_t+B)\cap (u_s+B)=\emptyset$ for distinct $s,t\in [0, d^{1/2}/2]$. By the fact that the sum of an uncountable number of positive numbers is infinite\footnote{\label{foot}Suppose that $I$ is an uncountable set and $a_i>0$ for $i\in I$. Let $C_n=\{a_i: a_i>1/n, i\in I \}$. Then there exists $n\in \N$ such that $C_n$ contains infinite elements. Thus $\sum_{i\in I}a_i\ge \sum_{a_i\in C_n} a_i\ge \sum_{a_i\in C_n} 1/n=+\infty$.}, we get that $\mu(u_t+A)>0$ (resp. $\mu(u_t+B)>0$) for at most countably many $t\in  [0, d^{1/2}/2]$. Thus there exists $t\in [0, d^{1/2}/2]$ such that $\mu(u_t+A)=\mu(u_t+B)=0$ and consequently $\mu(\partial(u_t+[0,1]^d))=0$.
\end{proof}

Let $v$ be as in Lemma \ref{lem:good point}. Then by Lemma \ref{lem:basic properties} and Lemma \ref{lem:restriction}, the measure $$\frac{1}{\mu(v+[0,1]^d)}\mu_{v+[0,1]^d}(\cdot+v)\in \mathcal{P}([0,1]^d)$$ has the same spectrum with $\mu$. Since 
$$
\overline{\dim}_e~\frac{1}{\mu(v+[0,1]^d)}\mu_{v+[0,1]^d}(\cdot+v)\le \overline{\dim}_e~\mu,
$$
it is sufficient to prove Theorem \ref{main thm 1} for measures in $\mathcal{P}([0,1]^d)$. 

The following lemma shows that the dyadic partition is ``nice"  up to a scaling of $\mu$.

\begin{lem}\label{lem: good s}
	Let $\mu\in \mathcal{P}([0,1]^d)$. Then there exists $1\le s< \infty$ such that $\mu(\cdot \times s)\in \mathcal{P}([0,1]^d)$ satisfies that  $\mu((\cdot \times s) \cap \partial D)=0$ for all $D\in \mathcal{D}_n$ where $\{\mathcal{D}_n\}_{n=1}^\infty$ is the set of the dyadic partitions.
\end{lem}
\begin{proof}
	Fix $n\in \N$ and $D\in \mathcal{D}_n$. Suppose $D=u+[0,2^{-n}]^d$ where $u=(u_i)_{1\le i\le d}\in [0,1]^d$. We decompose $\partial D=A\cup B$ where $A=\{x=(x_i)_{1\le i\le d}: x_i\in [u_i,u_i+2^{-n}], \exists x_j=u_j\}$ and $B=\{x=(x_i)_{1\le i\le d}: x_i\in [u_i,u_i+2^{-n}], \exists x_j=u_j+2^{-n}\}$. it is easy to check that $tA\cap sA=\emptyset$ and $tB\cap sB=\emptyset$ for distinct $s,t\in [1, \infty)$. By the fact that the sum of an uncountable number of positive numbers is infinite (See Footnote \ref{foot}), we get that $\mu(tA)>0$ (resp. $\mu(tB)>0$) for at most countably many $t\in  [1, \infty)$. It follows that $\mu(t\partial D)>0$ for at most countably many $t\in  [1, \infty)$. Since $\{\mathcal{D}_n\}_{n=1}^\infty$ consists of countably many elements, there  exists $1\le s< \infty$ such that $\mu(\cdot \times s)$ satisfies that  $\mu((\cdot \times s) \cap \partial D)=0$ for all $D\in \mathcal{D}_n$.
\end{proof}

  By Lemma \ref{lem: good s}, Lemma \ref{lem:basic properties} (3) and the facts that $\dim s\Lambda=\dim \Lambda$ and $\overline{\dim}_e \mu(\cdot \times s)=\overline{\dim}_e \mu$, we might assume $s=1$ for the sake of simplicity.

Finally, we summarize the reductions made so far in the following list.

\begin{red}\label{Red:reduction}
	In order to prove Theorem \ref{main thm 1}, we might assume that a frame spectral measure $\mu$ in $\R^d$ has the following structure:
	\begin{itemize}
		\item [(1)] The measure $\mu$ is singular continuous. 

		\item [(2)] The measure $\mu$ belongs to $\mathcal{P}([0,1]^d)$.

		\item [(3)] The dyadic partitions $\{\mathcal{D}_n\}_{n=1}^\infty$ satisfy that $\mu(\partial D)=0$ for all $D\in \mathcal{D}_n$.
	\end{itemize}
\end{red}

\section{Proof of main result}\label{Sec:Proof of main result}

Let $\mu\in \mathcal{P}([0,1]^d)$. It is well-known that the dual group $\widehat{\R^d}$ consists of exponential functions which is isomorphic to $\R^d$. We could thus identify $\R^d$ with the subspace in $L^2(\mu)$ and write $\left\langle f, \lambda \right\rangle_{\mu}$ for $f\in L^2(\mu)$ and $\lambda\in \R^d$. More precisely, we write the inner products
$$
\left\langle f, \lambda \right\rangle_{\mu}=\int_{[0,1]^d} f(x)e^{-2\pi i \lambda \cdot x} d\mu(x)
$$
and
$$
\left\langle t, \lambda \right\rangle_{\mu}=\int_{[0,1]^d} e^{2\pi i (t-\lambda) \cdot x} d\mu(x),
$$
for $f\in L^2(\mu)$ and $t,\lambda\in \R^d.$

The following two lemmas not only has its own interest but also are useful to prove our main result.
\begin{lem}\label{lem:change measure}
	Suppose that the measure $\mu\in \mathcal{P}([0,1]^d)$ is a frame spectral measure with spectrum $\Lambda$ and frame bounds $0<A\le B<\infty$. Let $n>0$. Then for any $D\in \mathcal{D}_n$ with $\mu(D)>0$, and for any $t\in \R^d$, we have
	\begin{equation*}
	\frac{A}{\mu(D)}\le \sum_{\lambda\in \Lambda} \left|\left\langle \frac{1}{2^n}t, \frac{1}{2^n}\lambda \right\rangle_{\mu_{D}^{\Box}}  \right|^2\le \frac{B}{\mu(D)}.
	\end{equation*}
\end{lem}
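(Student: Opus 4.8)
The plan is to deduce both inequalities simultaneously from the frame inequality \eqref{eq:frame} applied to one explicit test function, after rewriting the quantity in the statement in terms of inner products in $L^2(\mu)$. The natural choice is $f:=\mathbf{1}_D\cdot e_t$, where $e_t(x)=e^{2\pi i t\cdot x}$; since $\mu$ is a probability measure and $f$ is bounded, $f\in L^2(\mu)$ with $\norm{f}_\mu^2=\mu(D)$.

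First I would make the renormalization $\mu_D^\Box=\tfrac{1}{\mu(D)}(S_D)_*\mu_D$ explicit. Writing $D=\prod_{j=1}^d[k_j2^{-n},(k_j+1)2^{-n})$ with $k=(k_1,\dots,k_d)\in\Z^d$, the map $S_D\colon D\to[0,1]^d$ is the affine bijection $S_D(x)=2^nx-k$. Pushing the integral forward through $S_D$ gives, for $t,\lambda\in\R^d$,
\[
\Big\langle \tfrac{1}{2^n}t,\tfrac{1}{2^n}\lambda\Big\rangle_{\mu_D^\Box}
=\frac{1}{\mu(D)}\int_D e^{2\pi i\frac{1}{2^n}(t-\lambda)\cdot S_D(x)}\,d\mu(x)
=\frac{e^{-2\pi i\frac{1}{2^n}(t-\lambda)\cdot k}}{\mu(D)}\int_D e^{2\pi i(t-\lambda)\cdot x}\,d\mu(x),
\]
because $\tfrac{1}{2^n}(t-\lambda)\cdot S_D(x)=(t-\lambda)\cdot x-\tfrac{1}{2^n}(t-\lambda)\cdot k$. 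The leading exponential is unimodular, so it disappears on taking absolute values. With the inner-product convention recalled just above, $\langle f,\lambda\rangle_\mu=\int_D e^{2\pi i(t-\lambda)\cdot x}\,d\mu(x)$, whence $\big|\langle \tfrac{1}{2^n}t,\tfrac{1}{2^n}\lambda\rangle_{\mu_D^\Box}\big|=\mu(D)^{-1}\,|\langle f,\lambda\rangle_\mu|$ for every $\lambda$.

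Squaring, summing over $\lambda\in\Lambda$, and invoking \eqref{eq:frame} for $f$ together with $\norm{f}_\mu^2=\mu(D)$ then yields
\[
\frac{A}{\mu(D)}\le\frac{1}{\mu(D)^2}\sum_{\lambda\in\Lambda}|\langle f,\lambda\rangle_\mu|^2=\sum_{\lambda\in\Lambda}\Big|\Big\langle\tfrac{1}{2^n}t,\tfrac{1}{2^n}\lambda\Big\rangle_{\mu_D^\Box}\Big|^2\le\frac{B}{\mu(D)},
\]
which is the assertion. There is no genuine obstacle; the only step requiring care is the bookkeeping in the change of variables, namely checking that the dilation factor $2^n$ built into $S_D$ exactly cancels the $2^{-n}$ scaling of the frequencies $t$ and $\lambda$, so that the plain character $e^{2\pi i(t-\lambda)\cdot x}$ reappears, while the translation part of $S_D$ contributes only a phase of modulus one. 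Note also that only the upper bound in \eqref{eq:frame} is used for the right-hand inequality, and no assumption on $\mu(\partial D)$ is needed here, consistent with the remark following Theorem \ref{main thm 2}.
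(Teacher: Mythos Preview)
Your proof is correct and follows essentially the same route as the paper: both compute the change-of-variables identity
\(
\bigl|\langle \tfrac{1}{2^n}t,\tfrac{1}{2^n}\lambda\rangle_{\mu_D^\Box}\bigr|=\mu(D)^{-1}\bigl|\langle t,\lambda\rangle_{\mu_D}\bigr|
\)
and then invoke the frame inequality. The one difference is that the paper first applies Lemma~\ref{lem:restriction} (using Reduction~\ref{Red:reduction}\,(3), i.e.\ $\mu(\partial D)=0$) to pass to the frame spectral measure $\mu_D$ and then tests with $e_t\in L^2(\mu_D)$, whereas you stay in $L^2(\mu)$ and test directly with $f=\mathbf{1}_D\,e_t$; your variant is slightly cleaner since it bypasses Lemma~\ref{lem:restriction} and, as you note, needs no boundary hypothesis on $D$.
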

\begin{proof}
	Let  $D\in \mathcal{D}_n$ with $\mu(D)>0$. Let $t\in \R^d$. By Lemma \ref{lem:restriction} and Reduction \ref{Red:reduction} (3), we have
	\begin{equation}\label{eq:change measure 1}
	A\mu(D)\le \sum_{\lambda\in \Lambda} \left|\left\langle t, \lambda \right\rangle_{\mu_{D}}  \right|^2\le B\mu(D).
	\end{equation}
	For any $\lambda\in \Lambda$, we observe that
	\begin{equation*}
	\left\langle t, \lambda \right\rangle_{\mu_{D}}=\mu(D) e^{2\pi i(t-\lambda)\cdot v(D)} \left\langle  \frac{1}{2^n}t, \frac{1}{2^n}\lambda \right\rangle_{\mu_{D}^\Box},
	\end{equation*}
	where $v(D)\in \R^d$ is the vector satisfying $D=v(D)+[0, \frac{1}{2^n}]^d$. It follows that
	\begin{equation}\label{eq:change measure 2}
	\left|\left\langle t, \lambda \right\rangle_{\mu_{D}} \right|=\mu(D)\left|  \left\langle  \frac{1}{2^n}t, \frac{1}{2^n}\lambda \right\rangle_{\mu_{D}^\Box} \right|,
	\end{equation}
	for any $\lambda\in \Lambda$.
	Combing \eqref{eq:change measure 1} and \eqref{eq:change measure 2}, we complete the proof.
\end{proof}

\begin{lem}\label{lem:fourier transform}
	Let $\mu\in \mathcal{P}(\R^d)$. Then for any $0<\epsilon<1$, there exists $\delta=\delta(\epsilon)>0$ such that for any $|\xi|<\delta$ and for any $D\in \mathcal{D}_n$ with $\mu(D)>0$, we have
	$$
	\left| \widehat{\mu_{D}^{\Box}}(\xi)  \right|>\epsilon.
	$$
\end{lem}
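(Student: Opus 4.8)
The plan is to reduce everything to the single structural fact that, by the very definition of $\mu_D^\Box$, the measure $\mu_D^\Box=\frac{1}{\mu(D)}(S_D)_*\mu_D$ is a probability measure supported on the fixed cube $[0,1]^d$, no matter what $n$ is or which cube $D\in\mathcal{D}_n$ was chosen; in particular $\operatorname{diam}(\operatorname{supp}\mu_D^\Box)\le\sqrt d$ with a bound that is completely independent of $D$. First I would record the elementary estimate that for any $\nu\in\mathcal{P}([0,1]^d)$,
$$
\bigl|\widehat{\nu}(\xi)\bigr|\ \ge\ \operatorname{Re}\widehat{\nu}(\xi)\ =\ \int_{[0,1]^d}\cos(2\pi\,\xi\cdot x)\,d\nu(x),
$$
so that it suffices to make the integrand uniformly close to $1$ on $[0,1]^d$ once $|\xi|$ is small.

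Next, given $0<\epsilon<1$, I would use the continuity of cosine at $0$ to fix $\delta=\delta(\epsilon)>0$ so small that $\cos\theta>\epsilon$ for every $\theta$ with $|\theta|\le 2\pi\sqrt d\,\delta$. Then, for any $\xi$ with $|\xi|<\delta$ and any $x\in[0,1]^d$, the Cauchy--Schwarz inequality gives $|\xi\cdot x|\le |\xi|\,|x|\le\sqrt d\,\delta$, whence $2\pi|\xi\cdot x|\le 2\pi\sqrt d\,\delta$ and therefore $\cos(2\pi\,\xi\cdot x)>\epsilon$ at every point of $[0,1]^d$. Plugging this pointwise bound into the displayed inequality with $\nu=\mu_D^\Box$ gives
$$
\bigl|\widehat{\mu_D^\Box}(\xi)\bigr|\ \ge\ \int_{[0,1]^d}\cos(2\pi\,\xi\cdot x)\,d\mu_D^\Box(x)\ >\ \epsilon\,\mu_D^\Box\bigl([0,1]^d\bigr)\ =\ \epsilon,
$$
because $\mu_D^\Box$ is a probability measure on $[0,1]^d$. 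Since the choice of $\delta$ referred neither to $n$ nor to the particular $D$, this is precisely the asserted uniform bound.

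I do not expect any genuine obstacle here: the whole content is the quantitative uniform continuity at $\xi=0$ of the characters $x\mapsto e^{-2\pi i\xi\cdot x}$ over the compact set $[0,1]^d$. The only point that deserves emphasis — and the reason the lemma is phrased in terms of $\mu_D^\Box$ rather than $\mu_D$ — is exactly that the normalisation by $S_D$ rescales the support of every $\mu_D$ onto the common cube $[0,1]^d$; without this rescaling $\widehat{\mu_D}$ would oscillate on scale $\sim 2^{n}$ and no single $\delta$ could serve all $n$ simultaneously.
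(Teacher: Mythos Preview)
Your argument is correct and is essentially the same as the paper's: both bound $|\widehat{\mu_D^\Box}(\xi)|$ below by the integral of $\cos(2\pi\xi\cdot x)$ over $[0,1]^d$, then choose $\delta$ small enough that this cosine exceeds $\epsilon$ pointwise for all $x\in[0,1]^d$. The only cosmetic difference is that the paper uses the cruder bound $|\xi\cdot x|<d\delta$ in place of your $\sqrt d\,\delta$, and wraps the cosine integral in an extra absolute value; your explicit remark about why the rescaling to $[0,1]^d$ is essential is a nice addition.
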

\begin{proof}
	Let $0<\epsilon<1$. Pick arbitrary $D\in  \mathcal{D}_n$.
	A simple computation shows that 
	\begin{equation}\label{eq:fourier transform 1}
	\left|\widehat{\mu_{D}^{\Box}}(\xi) \right|=\left| \int_{[0,1]^d} e^{2\pi i \xi \cdot x}d\mu_{D}^{\Box}(x)  \right|\ge\left| \int_{[0,1]^d}\cos(2\pi \xi \cdot x)d\mu_{D}^{\Box}(x)  \right|.
	\end{equation}
	We choose $0<\delta<\frac{1}{4d}$ small enough such that $\cos(2d\pi \delta)>\epsilon$. Since $\cos(\theta)$ is positive and decreasing for $\theta\in (0,\frac{\pi}{2})$, we have $\cos(2d\pi \theta)>\epsilon$ for all $0<\theta< \delta$. Then for any $|\xi|<\delta$ and $x\in [0,1]^d$, we have that $|\xi\cdot x|<d\delta$ and consequently that $\cos(2\pi \xi \cdot x)>\epsilon$. It follows from \eqref{eq:fourier transform 1} that
	$$
	\left|\widehat{\mu_{D}^{\Box}}(\xi) \right|>\epsilon.
	$$
	This completes the proof.
\end{proof}

Now we prove our main result.
\begin{proof}[Proof of Theorem \ref{main thm 1}]
	 Pick arbitrary $s>0$. Then there exists $N>0$ such that for any $n>N$, we have
	 \begin{equation*}
	 \overline{\dim}_e \mu +s \ge \frac{1}{n}\sum_{D\in \mathcal{D}_n} -\mu(D)\log \mu(D).
	 \end{equation*}
	 It follows that
	 \begin{equation}\label{eq:main thm 1}
	 (2^{n})^{\overline{\dim}_e \mu +s}\ge 2^{\sum_{D\in \mathcal{D}_n} -\mu(D)\log \mu(D)}=\prod_{D\in \mathcal{D}_n, \mu(D)>0} \mu(D)^{-\mu(D)}.
	 \end{equation}
	 Let $h>2^N$. Then there exists a positive integer $n_h$ such that $2^{n_h-1}< h\le 2^{n_h}$. Let $\epsilon>0$. Let $\delta=\delta(\epsilon)$ which is defined in Lemma \ref{lem:fourier transform}. Let $\rho$ be the minimal integer such that $2^{-\rho}<\delta$. For any $t\in \R^d$ and any $D\in \mathcal{D}_{n_h+\rho}$, we have
	 \begin{equation}\label{eq:main thm 2}
	 \begin{split}
	 \epsilon^2 \cdot \sharp (\Lambda\cap B(t, h))
	 &\le \sum_{\lambda\in \Lambda\cap B(t, h)} \left|\left\langle \frac{1}{2^{n_h+\rho}}t, \frac{1}{2^{n_h+\rho}}\lambda \right\rangle_{\mu_{D}^{\Box}}  \right|^2\\
	 &\le \sum_{\lambda\in \Lambda} \left|\left\langle \frac{1}{2^{n_h+\rho}}t, \frac{1}{2^{n_h+\rho}}\lambda \right\rangle_{\mu_{D}^{\Box}}  \right|^2\le \frac{B}{\mu({D})}.
	 \end{split}
	 \end{equation}
	 Since $\sum_{D\in \mathcal{D}_{n_h+\rho}}\mu(D)=1$ and \eqref{eq:main thm 2} holds for all $D\in \mathcal{D}_{n_h+\rho}$ with $\mu(D)>0$, we have
	 \begin{equation}\label{eq:main thm 3}
	 \begin{split}
	 \sharp (\Lambda\cap B(t, h))
	 &=\prod_{D\in \mathcal{D}_{n_h+\rho}, \mu(D)>0}(\sharp (\Lambda\cap B(t, h)))^{\mu(D)} \\
	 &\le  B\epsilon^{-2} \prod_{D\in \mathcal{D}_{n_h+\rho}, \mu(D)>0} \mu(D)^{-\mu(D)}
	 \end{split}
	 \end{equation}
	 It follows from \eqref{eq:main thm 1} and \eqref{eq:main thm 3} that
	 \begin{equation*}
	 \frac{\sharp (\Lambda\cap B(t, h))}{h^{\overline{\dim}_e \mu +s}}\le \frac{\sharp (\Lambda\cap B(t, h))}{2^{(n_h-1)(\overline{\dim}_e \mu +s)}}\le B\epsilon^{-2}\cdot 2^{(1+\rho)(\overline{\dim}_e \mu +s)}.
	 \end{equation*}
	 Then we deduce that $\dim \Lambda\le \overline{\dim}_e \mu +s$. Since $s$ can be chosen arbitrarily close to $0$, we conclude that $\dim \Lambda\le \overline{\dim}_e \mu$.
\end{proof}

\section{Further discussion}\label{sec:Further discussion}
In this section, we will discuss several different notions of dimensions (or conditions) and its relations with Beurling dimension and entropy dimension.
\subsection{Lev's condition}
Let $\mu\in \mathcal{P}(\R^d)$. Given a real number $\alpha$ with $0\le \alpha\le d$.
Lev considered the following condition in \cite{Lev2018}:
\begin{equation}\label{eq:lev}
\liminf_{r\to \infty} \frac{1}{r^{d-\alpha}}\int_{-r}^r |\widehat{\mu}(t)|^2 dt>0.
\end{equation}
He proved that if  a frame spectral measure $\mu$  with frame spectrum $\Lambda$ satisfies \eqref{eq:lev}, then 
\begin{equation}\label{eq:lev 2}
\sup_{x\in \R^d} \sharp(\Lambda\cap B(x,r))\le Cr^\alpha,
\end{equation}
for some constant $C$ which does not depend on $r$.
We define 
$$
L(\mu)=\inf\{\alpha: \eqref{eq:lev}~\text{holds for}~\mu~\text{and}~\alpha \}.
$$
Therefore we restate \eqref{eq:lev 2} as follows.
\begin{lem}
	Let $\mu$ be a frame spectral measure with frame spectrum $\Lambda$. Then we have
	$$
	\dimB \Lambda\le L(\mu).
	$$
\end{lem}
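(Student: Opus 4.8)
The plan is to read off the inequality directly from Lev's implication \eqref{eq:lev}~$\Rightarrow$~\eqref{eq:lev 2}, combined with the second (``infimum'') characterization of the Beurling dimension recalled in Section \ref{Sec:Beurling dimension of countable sets}. First I would fix an arbitrary real number $\alpha$ for which \eqref{eq:lev} holds for $\mu$. Since $\mu$ is a frame spectral measure with frame spectrum $\Lambda$, Lev's theorem (cited from \cite{Lev2018}) provides a constant $C=C(\alpha)$, independent of $r$, with $\sup_{x\in\R^d}\sharp(\Lambda\cap B(x,r))\le Cr^\alpha$ for all $r$. Dividing by $r^\alpha$ and letting $r\to\infty$ shows $\mathfrak{D}_\alpha^+(\Lambda)\le C<+\infty$, so $\alpha\in\{r>0:\mathfrak{D}_r^+(\Lambda)<+\infty\}$; the characterization $\dimB\Lambda=\inf\{r>0:\mathfrak{D}_r^+(\Lambda)<+\infty\}$ then gives $\dimB\Lambda\le\alpha$.

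Since this bound holds for every $\alpha$ for which \eqref{eq:lev} is valid, taking the infimum over all such $\alpha$ yields precisely $\dimB\Lambda\le L(\mu)$. To be sure the statement is not vacuous I would also note that the index set defining $L(\mu)$ is nonempty: for $\alpha=d$, condition \eqref{eq:lev} reads $\liminf_{r\to\infty}\int_{-r}^{r}|\widehat{\mu}(t)|^2\,dt>0$, which holds after the harmless normalization $\mu\in\mathcal{P}(\R^d)$ (allowed by Lemma \ref{lem:basic properties}(2)), because then $\widehat{\mu}(0)=1$ and, by continuity of $\widehat{\mu}$, the nondecreasing quantity $\int_{-r}^{r}|\widehat{\mu}(t)|^2\,dt$ is bounded below by a positive constant for all large $r$; hence $L(\mu)\le d<\infty$. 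One may further observe that if \eqref{eq:lev} holds for $\alpha$ then it holds for every $\alpha'\in[\alpha,d]$, since $r^{d-\alpha'}\le r^{d-\alpha}$ for $r\ge 1$, so the index set is an interval with left endpoint $L(\mu)$; this is not needed for the argument but clarifies the geometry of the infimum.

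There is no serious obstacle: essentially all of the mathematical content is packaged inside Lev's implication \eqref{eq:lev}~$\Rightarrow$~\eqref{eq:lev 2}, which is quoted, and the remainder is unwinding definitions. The one point that deserves a little care is that $L(\mu)$ is an infimum that need not be attained, so the argument must be run with an \emph{arbitrary} admissible $\alpha$ and the infimum taken only at the end, rather than attempting to invoke Lev's theorem at the critical value $\alpha=L(\mu)$ itself.
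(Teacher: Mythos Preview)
Your proposal is correct and follows exactly the route the paper intends: the paper does not give a separate proof but simply introduces the lemma with ``Therefore we might restate \eqref{eq:lev 2} as follows,'' and your argument just makes explicit how Lev's implication \eqref{eq:lev}~$\Rightarrow$~\eqref{eq:lev 2} together with the infimum characterization of Beurling dimension yields $\dimB\Lambda\le\alpha$ for every admissible $\alpha$, hence $\dimB\Lambda\le L(\mu)$. Your additional remarks on nonemptiness of the index set and its interval structure are correct and harmless extras, not present in the paper.
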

The value $L(\mu)$ is sometimes related to the ``dimension" of $\mu$. For example, if $\mu$ is a certain self-similar measure with Hausdorff dimension $\alpha$, then $L(\mu)=\alpha$. However, such relation is very difficult to establish and compute in some cases. For instant, as far as I know, the condition \eqref{eq:lev} is unknown for self-affine measures. On the other hand, we will see that the entropy dimension is well established for self-affine measures in the next section.

\subsection{Hausdorff dimension}
For a Borel measure $\mu$, we know the facts that $\underline{\dim}_H \mu\le \overline{\dim}_e\mu$ and that $\overline{\dim}_H\mu$ is not comparable with $ \overline{\dim}_e\mu$ (see for example \cite{FanLauRao2002}). A natural question arises as to whether the entropy dimension can be replaced by the Hausdorff dimension in Theorems \ref{main thm 1} and \ref{main thm 2}. We will give a negative answer to this question in the following.

Let $I\subset \N$ and $p$ be a prime number.
For $n\in \N$, let 
$$
I_n=\{i\in I: i\le n \}
$$
be the finite subset of $I$ and let 
$$
C(I_n)=\left\{\sum _{i\in I_n}b_i p^{-i} :   b_{i}\in \{ 0, 1, \cdots, p-1\} \right\}
$$
be the finite subset of the unit interval $[0,1]$.
It is not hard to see that the weak limit of $\frac{1}{\sharp C(I_n)}\delta_{C(I_n)}$ exists, which is denoted by $\nu_I$, as $n$ tends to infinity. Let 
$$
\Lambda_{I_n}=\left\{\sum _{i\in I_n}b_i p^{i} :   b_{i}\in \{ 0, 1, \cdots, p-1\} \right\}
$$ 
be the finite subset of $\Z$. Obviously, we have the inclusion $\Lambda_{I_1}\subset \Lambda_{I_2}\subset \cdots$. Let 
$$
\Lambda_{I} = \cup_{i\in \N} \Lambda_{I_n}.
$$
In \cite{Shi2019}, the author showed that the measure $\nu_I$ is a spectral measure with spectrum $\Lambda_{I}$. Moreover, it is computed that
$$
\dimB \Lambda_{I}=\overline{\dim}_e ~\nu_I =\limsup_{n\to \infty} \frac{\sharp I_n}{n},
$$
and 
$$
\dim_H \text{supp}(\mu)= \overline{\dim}_H~\nu_I=\underline{\dim}_H ~\nu_I=\liminf_{n\to \infty} \frac{\sharp I_n}{n}.
$$
At the same time, it is shown that
$$
\overline{\dim}_e ~\nu_I=\inf_{\mu(K)>0, \mu(\partial K)=0 } \{\overline{\dim}_e ~{(\nu_I)}_{K}\},
$$
and
\begin{align*}
\dim_H \text{supp}(\mu)
&=\inf_{\mu(K)>0, \mu(\partial K)=0} \{\overline{\dim}_H ~{(\nu_I)}_{K} \}\\
&=\inf_{\mu(K)>0, \mu(\partial K)=0} \{\underline{\dim}_H ~{(\nu_I)}_{K} \}.
\end{align*}
It is not hard to pick suitable $I\subset \N$ such that 
\begin{equation}\label{eq:strict larger}
\liminf_{n\to \infty} \frac{\sharp I_n}{n}<\limsup_{n\to \infty} \frac{\sharp I_n}{n}.
\end{equation}
Under the condition \eqref{eq:strict larger}, we have 
$$
\dimB \Lambda_{I}=\overline{\dim}_e ~\nu_I>\dim_H \text{supp}(\mu)= \overline{\dim}_H~\nu_I=\underline{\dim}_H ~\nu_I.
$$
This disproves Conjecture \ref{conj 1}. 

\subsection{Fourier dimension}
A lower bound of Beurling dimension was obtained in Theorem 1.3 \cite{IosLaiLiuWym2019} that if $\mu$ is a frame spectral measure with spectrum $\Lambda$, then
\begin{equation}
\dimB \Lambda\ge \dim_{F}\mu,
\end{equation}
where $\dim_F$ is the Fourier dimension which is defined by the formula
$$
\dim_{F}\mu:=\sup \left\{0\le s\le d: \exists C, \forall \xi,  |\widehat{\mu}(\xi)|\le C|\xi|^{-s/2} \right\}.
$$
Combining this with Lemma \ref{lem:restriction}, we obtain the following Theorem.
\begin{thm}\label{thm:lower bound}
	Let $\mu$ be a Borel measure on $\R^d$. Suppose that $\mu$ is a frame spectral measure with frame spectrum $\Lambda$. Then we have
	$$
	\dimB \Lambda\ge \sup_{\mu(K)>0, \mu(\partial K)=0}{\dim}_F~ \mu_K.
	$$
\end{thm}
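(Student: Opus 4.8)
The plan is to deduce Theorem \ref{thm:lower bound} by applying the pointwise lower bound $\dimB \Lambda \ge \dim_F \mu$ of \cite[Theorem 1.3]{IosLaiLiuWym2019} not to $\mu$ itself but to each restriction $\mu_K$, and then taking a supremum. Fix an arbitrary Borel set $K \subset \R^d$ with $\mu(K) > 0$ and $\mu(\partial K) = 0$. By Lemma \ref{lem:restriction}, the restricted measure $\mu_K$ is again a frame spectral measure, and it carries the \emph{same} frame spectrum $\Lambda$ (in fact the same frame bounds). Thus $\mu_K$ is a nonzero finite Borel measure admitting $\Lambda$ as a frame spectrum, so it is covered by \cite[Theorem 1.3]{IosLaiLiuWym2019}.

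To invoke that theorem one usually wants a probability measure, so I would pass to the normalization $\mu(K)^{-1}\mu_K \in \mathcal{P}(\R^d)$. By Lemma \ref{lem:basic properties}(2) this does not change the spectrum $\Lambda$, and since $\widehat{c\,\mu_K} = c\,\widehat{\mu_K}$ with $c = \mu(K)^{-1}$, the polynomial decay condition defining the Fourier dimension is unaffected, giving $\dim_F\!\big(\mu(K)^{-1}\mu_K\big) = \dim_F \mu_K$. Applying \cite[Theorem 1.3]{IosLaiLiuWym2019} to $\mu(K)^{-1}\mu_K$ with spectrum $\Lambda$ therefore yields $\dimB \Lambda \ge \dim_F \mu_K$ for this particular $K$.

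Finally, since $K$ ranges over all Borel sets with $\mu(K)>0$ and $\mu(\partial K)=0$ while the left-hand side $\dimB \Lambda$ is independent of $K$, I would take the supremum over all such $K$ to obtain $\dimB \Lambda \ge \sup_{\mu(K)>0,\ \mu(\partial K)=0}\dim_F \mu_K$, as claimed. There is no substantial obstacle here: the argument is a direct combination of Lemma \ref{lem:restriction} with the cited theorem, and the only points requiring a moment of care are the harmless normalization step and the observation — supplied by Lemma \ref{lem:restriction} — that the restricted measure genuinely retains $\Lambda$ as a frame spectrum.
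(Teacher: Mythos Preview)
Your proposal is correct and matches the paper's own argument essentially verbatim: the paper simply states that the result follows by combining \cite[Theorem 1.3]{IosLaiLiuWym2019} with Lemma \ref{lem:restriction}, which is precisely the route you take (your normalization step is a harmless extra detail).
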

A direct consequence of Theorems \ref{main thm 2} and \ref{thm:lower bound} is the following necessary condition for frame spectral measures.
\begin{cor}\label{cor:necessary condition}
	Let $\mu$ be a Borel measure on $\R^d$. Suppose that $\mu$ is a frame spectral measure with frame spectrum $\Lambda$. Then we have
	\begin{equation}\label{eq:necessary condition}
	\sup_{\mu(K)>0, \mu(\partial K)=0}{\dim}_F~ \mu_K\le \inf_{\mu(K)>0, \mu(\partial K)=0}\overline{\dimE}~ \mu_K.
	\end{equation}
\end{cor}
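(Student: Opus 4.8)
The plan is to derive \eqref{eq:necessary condition} by squeezing the Beurling dimension of the frame spectrum between the two quantities that appear in it, using nothing beyond Theorems \ref{main thm 2} and \ref{thm:lower bound}. By hypothesis $\mu$ is a frame spectral measure with frame spectrum $\Lambda$, which is exactly the common assumption of those two theorems. First I would apply Theorem \ref{thm:lower bound} to $\mu$ and $\Lambda$ to obtain
$$
\sup_{\mu(K)>0,\ \mu(\partial K)=0}\dim_F \mu_K\ \le\ \dimB\Lambda,
$$
and then I would apply Theorem \ref{main thm 2} to the same $\mu$ and $\Lambda$ to obtain
$$
\dimB\Lambda\ \le\ \inf_{\mu(K)>0,\ \mu(\partial K)=0}\overline{\dimE} \mu_K.
$$
Chaining these two inequalities through the common middle term $\dimB\Lambda$ yields \eqref{eq:necessary condition}, which completes the argument.

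There is no genuine obstacle here; the only point worth flagging is that the supremum on the left and the infimum on the right range over the \emph{same} family of admissible restrictions $\mu_K$ (those with $\mu(K)>0$ and $\mu(\partial K)=0$), yet they remain transitively comparable precisely because the intermediate quantity $\dimB\Lambda$ is a single number that does not depend on the truncating set $K$. If one prefers, one may instead run the sandwich for each fixed admissible $K$: by Lemma \ref{lem:restriction}, $\mu_K$ is again a frame spectral measure with the same spectrum $\Lambda$, so $\dim_F\mu_K\le\dimB\Lambda\le\overline{\dimE}\mu_K$ for every such $K$, and taking the supremum over the left-hand $K$ and the infimum over the right-hand $K$ recovers the stated form. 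Thus the corollary is essentially bookkeeping: it repackages Theorems \ref{main thm 2} and \ref{thm:lower bound} into a single inequality intrinsic to $\mu$, giving a testable necessary condition for $\mu$ to admit an exponential frame.
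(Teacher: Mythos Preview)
Your argument is correct and matches the paper's own treatment: the corollary is stated there as a direct consequence of Theorems \ref{main thm 2} and \ref{thm:lower bound}, which is exactly the chaining through $\dimB\Lambda$ that you carry out. Your alternative per-$K$ sandwich via Lemma \ref{lem:restriction} is also valid and amounts to the same thing.
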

Using Corollary \ref{cor:necessary condition}, some non frame spectral measures could be shown as follows.
\begin{cor}\label{cor:sum}
	Let $\mu, \nu$ and $\rho$ be Borel measures on $\R^d$. Suppose that $\dim_F\mu>\overline{\dim}_e \nu$ and $\mu(\text{supp}(\nu+\rho))=\nu(\text{supp}(\mu+\rho))=0$. Then the measure $\mu+\nu+\rho$ is not a frame spectral measure.
\end{cor}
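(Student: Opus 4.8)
The plan is to argue by contradiction. Suppose $\sigma:=\mu+\nu+\rho$ is a frame spectral measure. Then $\sigma$ has finite total mass, so $\mu,\nu,\rho$ are finite, and we take $\mu$ and $\nu$ to be nonzero (the degenerate cases being excluded by the hypothesis); since $\dim_F\mu\le d$, the hypothesis forces $\overline{\dim}_e\nu<d<\infty$. I would then apply Corollary \ref{cor:necessary condition} to $\sigma$ after producing two sets admissible in \eqref{eq:necessary condition}: an open set $U$ with $\sigma_U=\mu_U$ for some $\mu_U$ carrying all but $\varepsilon$ of the mass of $\mu$, and an open set $U'$ with $\sigma_{U'}=\nu_{U'}$ for some $\nu_{U'}$ carrying all but $\varepsilon'$ of the mass of $\nu$. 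Pinning the two sides of \eqref{eq:necessary condition} by means of these sets gives $\dim_F\mu\le(1+o(1))\,\overline{\dim}_e\nu$, and letting $\varepsilon,\varepsilon'\to0$ contradicts $\dim_F\mu>\overline{\dim}_e\nu$.

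To build $U$ I would use the hypothesis $\mu(\text{supp}(\nu+\rho))=0$. Put $S:=\text{supp}(\nu+\rho)$, a closed set with $(\nu+\rho)(S^c)=0$ and $\mu(S)=0$, so $\mu(S^c)=\mu(\R^d)$. Fix $\varepsilon\in(0,\mu(\R^d))$ and, by inner regularity, a compact $C\subseteq S^c$ with $\mu(C)>\mu(\R^d)-\varepsilon$. For each $x\in C$ pick a radius $r_x>0$ with $\overline{B(x,r_x)}\subseteq S^c$ and, since for a fixed centre the spheres $\partial B(x,r)$ are disjoint and $\mu$ is finite, with $\mu(\partial B(x,r_x))=0$ as well; cover $C$ by finitely many such balls and let $U$ be their union. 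Then $U$ is open, $\overline U\subseteq S^c$, $\mu(\partial U)=0$, and $\mu(U)\ge\mu(C)>\mu(\R^d)-\varepsilon>0$. As $\overline U\subseteq S^c$ and $(\nu+\rho)(S^c)=0$, the measures $\nu$ and $\rho$ vanish on $\overline U$; hence $\sigma_U=\mu_U$, $\sigma(\partial U)=0$ and $\sigma(U)=\mu(U)>0$, so $U$ is admissible. Writing $\mu=\mu_U+\mu_{U^c}$ and using the elementary fact that $\dim_F$ of a sum of finite positive measures is the minimum of their Fourier dimensions (immediate from $|\widehat{\alpha+\beta}|\le|\widehat\alpha|+|\widehat\beta|$ and $|\widehat\alpha|=|\widehat{\alpha+\beta}-\widehat\beta|$), I obtain $\dim_F\mu=\min\{\dim_F\mu_U,\dim_F\mu_{U^c}\}\le\dim_F\mu_U=\dim_F\sigma_U$.

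Symmetrically, from $\nu(\text{supp}(\mu+\rho))=0$ I would build, for each $\varepsilon'\in(0,\nu(\R^d))$, an open $U'$ with $\overline{U'}\subseteq(\text{supp}(\mu+\rho))^c$, $\nu(\partial U')=0$ and $\nu(U')>\nu(\R^d)-\varepsilon'$, so that $\sigma_{U'}=\nu_{U'}$ and $U'$ is admissible. Here, instead of the Fourier dimension, I would use concavity of the Shannon entropy: with $\lambda:=\nu(U')/\nu(\R^d)$, writing the probability measure $\nu/\nu(\R^d)$ as the convex combination $\lambda\cdot\nu_{U'}/\nu(U')+(1-\lambda)\cdot\nu_{\R^d\setminus U'}/\nu(\R^d\setminus U')$ gives $H(\nu/\nu(\R^d),\mathcal D_n)\ge\lambda\,H(\nu_{U'}/\nu(U'),\mathcal D_n)$ for every $n$, hence $\overline{\dim}_e\sigma_{U'}=\overline{\dim}_e\nu_{U'}\le\lambda^{-1}\overline{\dim}_e\nu\le\frac{\nu(\R^d)}{\nu(\R^d)-\varepsilon'}\,\overline{\dim}_e\nu$. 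Feeding the admissible sets $U$ and $U'$ into \eqref{eq:necessary condition} now yields $\dim_F\mu\le\dim_F\sigma_U\le\overline{\dim}_e\sigma_{U'}\le\frac{\nu(\R^d)}{\nu(\R^d)-\varepsilon'}\,\overline{\dim}_e\nu$, and since $\overline{\dim}_e\nu<\infty$, letting $\varepsilon'\to0$ gives $\dim_F\mu\le\overline{\dim}_e\nu$, a contradiction. Hence $\sigma$ is not a frame spectral measure.

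The main obstacle is the construction of $U$ (and likewise $U'$): although $\mu(S)=0$, one cannot in general find an open set whose closure misses $S$ and which carries all of the $\mu$-mass — a relatively open but non-closed portion of $\text{supp}\,\mu$ may cling to $S$ — so one must accept losing an $\varepsilon$ of the mass, which is exactly why the two auxiliary observations (that such a restriction can only raise $\dim_F$, and that it raises $\overline{\dim}_e$ by at most the factor $\lambda^{-1}$) are needed. Everything else — finiteness of $\mu,\nu,\rho$, the bound $\overline{\dim}_e\nu\le d$, and the generic-radius trick securing both $\overline U\subseteq S^c$ and $\mu(\partial U)=0$ — is routine; in fact only the single comparison $\dim_F\sigma_U\le\overline{\dim}_e\sigma_{U'}$ is invoked, which already follows from Theorems \ref{thm:lower bound} and \ref{main thm 2} without passing through the supremum and infimum in \eqref{eq:necessary condition}.
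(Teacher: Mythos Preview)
Your argument has a genuine gap on the Fourier-dimension side. The ``elementary fact'' you invoke, namely that $\dim_F(\alpha+\beta)=\min\{\dim_F\alpha,\dim_F\beta\}$ for finite positive measures, is false; only the inequality $\dim_F(\alpha+\beta)\ge\min\{\dim_F\alpha,\dim_F\beta\}$ follows from the triangle inequalities you cite, and that is the direction you do \emph{not} need. For a counterexample, take $f(x)=1+\sum_{k\ge1}a_k\cos(2\pi n_k x)$ on $[0,1]$ with $a_k=k^{-2}$ and $n_k$ growing super-polynomially, and set $g=2-f$. Then $\alpha=f\,dx$ and $\beta=g\,dx$ are positive, $\dim_F\alpha=\dim_F\beta=0$ (since $|\widehat\alpha(n_k)|=|a_k|/2$ and $|a_k|\,n_k^{s/2}\to\infty$ for every $s>0$), yet $\alpha+\beta=2\cdot 1_{[0,1]}dx$ has $\dim_F=1$. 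In particular, restricting a measure to a set carrying most of its mass can strictly \emph{decrease} the Fourier dimension, so your conclusion $\dim_F\mu\le\dim_F\mu_U$ is unjustified and in general wrong. The approximation-from-inside strategy, which you correctly identify as the crux, therefore does not close.

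The fix --- and the argument the paper has in mind --- is not to approximate at all but to take $K=\bigl(\operatorname{supp}(\nu+\rho)\bigr)^c$ and $K'=\bigl(\operatorname{supp}(\mu+\rho)\bigr)^c$ outright. Since $\mu(K^c)=0$ and $(\nu+\rho)(K)=0$, one has $\sigma_K=\mu$ \emph{exactly}, and likewise $\sigma_{K'}=\nu$; then $\dim_F\mu\le\dimB\Lambda\le\overline{\dim}_e\,\nu$ gives the contradiction in one line. You worried that $\sigma(\partial K)$ may be positive (it typically is, e.g.\ when $\operatorname{supp}(\nu+\rho)$ has empty interior), but the hypothesis $\mu(\partial K)=0$ in Lemma~\ref{lem:restriction} is in fact superfluous: for \emph{any} measurable $K$ with $\sigma(K)>0$, extending $f\in L^2(\sigma_K)$ by zero on $K^c$ gives $\|f\|_\sigma=\|f\|_{\sigma_K}$ and $\langle f,e_\lambda\rangle_\sigma=\langle f,e_\lambda\rangle_{\sigma_K}$, so $\sigma_K$ inherits the same frame. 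Hence Theorems~\ref{main thm 2} and~\ref{thm:lower bound}, and with them Corollary~\ref{cor:necessary condition}, apply to these $K,K'$, and your entropy estimate and the whole $\varepsilon$-apparatus become unnecessary.
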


We end up this section by proposing some open questions.
We remark that if $\mu$ is absolutely continuous or discrete, then the equality holds in \eqref{eq:necessary condition} in Corollary \ref{cor:necessary condition}. Hence we might ask the same question for singular continuous measures:
\begin{que}
	Does there exist a frame spectral measure which is singular continuous and the equality holds in \eqref{eq:necessary condition}?
\end{que}  
As far as I know, we don't yet have an example of frame spectral measures that have non-zero Fourier dimension. Thus we might ask the following question.
\begin{que}
	Does there exist a frame spectral measure that is singular continuous and has non-zero Fourier dimension?
\end{que}


\section{Potential examples}\label{Sec:Potential examples}

In this section, we apply Theorems \ref{main thm 1} and \ref{main thm 2} for various measures. Since few concrete examples of frame spectral measures are known, the results in this section might be helpful to find new examples of measures of different type. 

\subsection{Self-affine measures}
A function $\varphi$ is called a \textit{contraction} on a complete metric space $X$ with metric $d$ if $d(\varphi(x),\varphi(y))<d(x,y)$ holds for every $x\not=y\in X$. If $\{\varphi_i\}_{1\le i\le N}$ are contractions of $X$ it is well-known that there exists a unique non-empty compact set $K\subset X$ such that $K=\cup_{1\le i\le N} \varphi_i(K)$ (see \cite{Hut1981}). In this circumstance the tuple $\{\varphi_i\}_{1\le i\le N}$ is called an \textit{iterated function system} (IFS) and $F$ its attractor. A central problem in the study of iterated function systems is to calculate or estimate the dimension of the attractor $F$ for various notions of fractal dimension, most especially the Hausdorff dimension. Particular interest has been given to the case of affine iterated function system, where the ambient space $X$ is given by $\R^d$ and the contractions $\varphi_i$ take the form $\varphi_i: x\mapsto A_ix+b_i$ for certain (usually invertible) linear maps $A_i\in \mathcal{L}(\R^d, \R^d)$ and vectors $b_i$. The associated attractors are called \textit{self-affine}.

Very recently, Hochman and Rapaport \cite{HocRap2019} proved that if $\mu$ is a self-affine measure in the plane whose defining IFS acts totally irreducibly and satisfies an exponential separation condition, then its dimension is equal to its Lyapunov dimension. Applying Theorem \ref{main thm 1}, we get the following corollary.

\begin{cor}
	Let $\Phi=\{\varphi_i \}_{i\in J}$ be a finite system of invertible affine contractions of $\R^2$. Suppose that $\Phi$ has no common fixed point, satisfies the non-conformality and total irreducibility assumptions, and is exponentially separated. Let $p$ be a positive probability vector. Let $\mu=\sum p_i\cdot \varphi_i \mu$ be the associated self-affine measure. Assume that $\mu$ is a frame spectral measure with frame spectrum $\Lambda$. Then 
	$$
	\dim \Lambda\le \min\{2, \dim_{L} \mu \},
	$$
	where $\dim_{L}$ stands for Lyapunov dimension.
\end{cor}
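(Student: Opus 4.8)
The plan is to obtain the corollary by combining Theorem~\ref{main thm 1} with the dimension theory of planar self-affine measures. Since $\mu$ is assumed to be a frame spectral measure with frame spectrum $\Lambda$, Theorem~\ref{main thm 1} gives at once
\[
\dim\Lambda \le \overline{\dim}_e~\mu .
\]
Because $\tfrac1n H(\mu,\mathcal{D}_n)\le d=2$ for every $n$, one always has $\overline{\dim}_e~\mu\le 2$, so the bound $\dim\Lambda\le 2$ is automatic and the substantive content is the inequality $\dim\Lambda\le\dim_L\mu$ when $\dim_L\mu<2$. Thus it suffices to show $\overline{\dim}_e~\mu=\min\{2,\dim_L\mu\}$.

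For this I would invoke two external facts. First, the self-affine measure $\mu$ is the push-forward of the Bernoulli measure with weights $p$ under the coding map of $\Phi$, and such measures are exact dimensional (Feng--Hu); hence the local dimension of $\mu$ is $\mu$-a.e.\ equal to a constant $\dim\mu$, and the lemma in Section~\ref{Sec:dimension of measure} then yields that $\dim_e~\mu$ exists and $\overline{\dim}_e~\mu=\dim\mu$. Second, under the hypotheses of the corollary --- $\Phi$ has no common fixed point, its linear parts act non-conformally and totally irreducibly, $\Phi$ is exponentially separated, and the weights $p_i$ are strictly positive --- the Hochman--Rapaport theorem \cite{HocRap2019} applies and gives $\dim\mu=\min\{2,\dim_L\mu\}$, the truncation by $2$ reflecting the trivial bound $\dim\mu\le 2$. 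Chaining these equalities with the inequality from Theorem~\ref{main thm 1} produces $\dim\Lambda\le\min\{2,\dim_L\mu\}$, as desired.

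I do not expect a genuine analytic obstacle here: once the two inputs are quoted, the argument is a short chain of (in)equalities. The only care needed is of a bookkeeping nature --- verifying that the list of hypotheses in the corollary corresponds exactly to the assumptions required in \cite{HocRap2019} (in particular total irreducibility of the linear parts, and that ``non-conformality together with no common fixed point'' is the right formulation), and confirming that exact dimensionality of self-affine measures is available in the generality needed, without any extra separation assumption. These are the points where a careful reader should cross-check the literature; the remainder is routine.
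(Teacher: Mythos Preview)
Your proposal is correct and matches the paper's approach exactly: the paper does not give a separate proof of this corollary, but simply states in the preceding sentence that it follows by combining Theorem~\ref{main thm 1} with the Hochman--Rapaport result \cite{HocRap2019}, which is precisely the chain of (in)equalities you wrote out. Your additional remark that exact dimensionality (via Feng--Hu or \cite{Feng2019}) is needed to pass from $\overline{\dim}_e~\mu$ to $\dim\mu$ via the lemma in Section~\ref{Sec:dimension of measure} is the correct bridge, and your caveats about matching hypotheses with \cite{HocRap2019} are appropriate bookkeeping rather than gaps.
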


In general, Feng \cite{Feng2019} proved that every ergodic invariant measure for an affine IFS is exact dimensional, and its Hausdorff dimension satisfies a Ledrappier-Young type formula. Applying Theorem \ref{main thm 1}, we have the following corollary.

\begin{cor}
	Let $\mu$ be an ergodic invariant measure for an affine IFS. Suppose that $\mu$ is a frame spectral measure with frame spectrum $\Lambda$. Then we have 
	$$
	\dim \Lambda\le \dim \mu.
	$$
\end{cor}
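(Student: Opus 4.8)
The plan is to deduce the corollary directly from Theorem \ref{main thm 1} together with the exact dimensionality of ergodic invariant measures established by Feng. First I would observe that, being an ergodic invariant measure for an affine IFS, $\mu$ is a Borel probability measure supported on the (compact) attractor; in particular $\mu\in\mathcal{P}(\R^d)$, so Theorem \ref{main thm 1} applies to $\mu$ and its frame spectrum $\Lambda$ verbatim. No normalization, restriction, or rescaling is needed here, and the compactness of the support plays no role in the argument.

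Next I would invoke Feng's theorem \cite{Feng2019}: every ergodic invariant measure $\mu$ for an affine IFS is exact dimensional, meaning that the local dimension $\lim_{r\to 0}\frac{\log\mu(B(x,r))}{\log r}$ exists and is $\mu$-a.e.\ equal to a constant, which by definition is $\dim\mu$. (Feng's result gives considerably more, namely a Ledrappier--Young type formula for this constant, but for the corollary only exact dimensionality is used.)

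Then I would apply the lemma stated right after the definition of exact dimensionality in Section \ref{Sec:dimension of measure} (with proof in \cite{FanLauRao2002}): if $\mu\in\mathcal{P}(\R^d)$ is exact dimensional, then $\dim_e\mu$ exists and equals $\dim\mu$. In particular $\overline{\dim}_e\mu=\dim_e\mu=\dim\mu$. Combining this with Theorem \ref{main thm 1} gives $\dim\Lambda=\dimB\Lambda\le\overline{\dim}_e\mu=\dim\mu$, which is precisely the asserted inequality.

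There is essentially no obstacle to overcome: all the substance is carried by Feng's exact-dimensionality theorem and by Theorem \ref{main thm 1}. The only points deserving a word of care are the identification of $\dim\Lambda$ in the statement with the upper Beurling dimension of Section \ref{Sec:Beurling dimension of countable sets}, and checking that $\mu$ is indeed a probability measure so that both the entropy-dimension lemma and the notion $\dim\mu$ are literally applicable; both are immediate from the definition of an ergodic invariant measure for an affine IFS.
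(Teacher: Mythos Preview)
Your proof is correct and follows essentially the same approach as the paper: the corollary is obtained by combining Feng's exact-dimensionality result with Theorem~\ref{main thm 1} via the lemma (from \cite{FanLauRao2002}) that exact dimensionality forces $\overline{\dim}_e\mu=\dim\mu$. The paper states this as an immediate application without spelling out the intermediate step through the entropy-dimension lemma, but your added detail is exactly what is intended.
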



\subsection{$\times \beta$-invariant measures}

Let $T_\beta$ be the multiplication by $\beta$ modulo one on the unite interval. It is well known that if $\beta$ is Pisot number and $\mu$ is $T_\beta$-invariant, then $\mu$ is exact dimensional. Thus we have the following corollary as a direct consequence of Theorem \ref{main thm 1}.

\begin{cor}
 Let $\beta$ be a Pisot number.	If $\mu$ is a $T_\beta$-invariant frame spectral measure with spectrum $\Lambda$, then we have
 $$
 \dim \Lambda\le \dim \mu.
 $$
\end{cor}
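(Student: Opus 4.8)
The plan is to obtain this corollary as an immediate application of Theorem~\ref{main thm 1}, with exact dimensionality serving as the bridge. Since a frame spectral measure has finite total mass (as observed just after Lemma~\ref{lem:basic properties}) and since scaling by a nonzero constant preserves both the spectrum and the entropy dimension by Lemma~\ref{lem:basic properties}(2), I would first normalize $\mu$ so that $\mu\in\mathcal{P}([0,1))$; this changes neither $\Lambda$ nor $\overline{\dimE}\,\mu$, and $\mu$ remains $T_\beta$-invariant.

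Next I would invoke the well-known fact recalled above: because $\beta$ is a Pisot number, every $T_\beta$-invariant probability measure $\mu$ is exact dimensional, so the local dimension $\lim_{r\to 0}\log\mu(B(x,r))/\log r$ is $\mu$-a.e.\ equal to the constant $\dim\mu$. By the Lemma stated in Section~\ref{Sec:dimension of measure}, exact dimensionality forces $\dimE\,\mu$ to exist and to equal $\dim\mu$; in particular $\overline{\dimE}\,\mu=\dim\mu$. Substituting this into Theorem~\ref{main thm 1} gives $\dim\Lambda=\dimB\Lambda\le\overline{\dimE}\,\mu=\dim\mu$, which is exactly the claim.

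The only step that deserves a second look — and hence what I would call the (rather minor) main obstacle — is making sure the exact-dimensionality input is available in precisely the form needed. Theorem~\ref{main thm 1} only bounds $\dimB\Lambda$ by the \emph{upper} entropy dimension $\overline{\dimE}\,\mu$, so to land on $\dim\mu$ one genuinely needs the Section~\ref{Sec:dimension of measure} lemma converting $\overline{\dimE}\,\mu$ into $\dim\mu$, which in turn requires exact dimensionality for the given invariant measure $\mu$, not merely for its ergodic components. If one only has the classical dimension formula for ergodic $T_\beta$-invariant measures, one should first pass through the ergodic decomposition and check that the local dimension remains a.e.\ constant; alternatively, one simply cites the literature establishing exact dimensionality of $\times\beta$-invariant measures for Pisot $\beta$ directly. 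Granting that input, the corollary is a one-line substitution into Theorem~\ref{main thm 1}, with no further computation required.
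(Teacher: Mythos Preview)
Your proposal is correct and follows exactly the route the paper takes: the corollary is stated there as ``a direct consequence of Theorem~\ref{main thm 1}'' after recalling that $T_\beta$-invariant measures are exact dimensional for Pisot $\beta$, so that $\overline{\dimE}\,\mu=\dim\mu$ via the lemma in Section~\ref{Sec:dimension of measure}. Your normalization remark and the caveat about ergodicity versus mere invariance are sensible but go slightly beyond what the paper spells out.
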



\subsection{Measures of ``mixed type" }

It is used to be conjectured that a pure type phenomenon should also exist within
the class of singular continuous measures, that is to say, all frame spectral measures are exact dimensional. The first counterexample was constructed by Lev \cite{Lev2018} as follows.

Let $\mu\in \mathcal{P}(\R^n)$ and $\nu\in \mathcal{P}(\R^m)$. We define a new measure $\rho$ on $\R^{n+m}$ by
$$
\rho=\mu\times \delta_0+\delta_0\times \nu,
$$
where $\delta_0$ denotes the Dirac measure at the origin. It is not hard to see that $\rho$ is the singular measure whose support is contained in $(\R^n\times \{0\})\cup (\{0\}\times \R^m)$. The frame spectral measure of ``mixed type" is constructed in the following theorem.

\begin{thm}[\cite{Lev2018}, Theorem 2.1]\label{thm:Nir lev}
	Assume that two measures $\mu$ and $\nu$ are continuous frame spectral measures. Then the measure $\rho$ defined above is also a frame spectral measure. 
\end{thm}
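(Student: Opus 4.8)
The plan is to decouple $L^2(\rho)$ into an orthogonal sum and then manufacture a frame of exponentials for the sum by letting the spectrum encode a suitable bipartite graph. First I would record the decoupling: writing a point of $\R^{n+m}$ as $(x,y)$ with $x\in\R^n$ and $y\in\R^m$, and using that the singleton $\{(0,0)\}$ is $\rho$-null (since $\mu$ and $\nu$ are continuous), one has $\|F\|_{L^2(\rho)}^2=\int_{\R^n}|F(x,0)|^2\,d\mu(x)+\int_{\R^m}|F(0,y)|^2\,d\nu(y)$ for every $F\in L^2(\rho)$. Hence $F\mapsto\bigl(F(\cdot,0),F(0,\cdot)\bigr)$ is a unitary isomorphism $L^2(\rho)\xrightarrow{\ \sim\ }L^2(\mu)\oplus L^2(\nu)$ carrying the exponential indexed by $(a,c)\in\R^n\times\R^m$ to the pair $(e_a,e_c)$, $e_a\in L^2(\mu)$, $e_c\in L^2(\nu)$. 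So it is enough to produce a countable $\Lambda\subset\R^n\times\R^m$ for which $\{(e_a,e_c):(a,c)\in\Lambda\}$ is a frame for $L^2(\mu)\oplus L^2(\nu)$.

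Next I would build $\Lambda$ from a graph. Fix frame spectra $\Lambda_\mu\subset\R^n$ and $\Lambda_\nu\subset\R^m$ of $\mu$ and $\nu$, with bounds $A_\mu\le B_\mu$ and $A_\nu\le B_\nu$; since $\mu$ and $\nu$ are continuous, $L^2(\mu)$ and $L^2(\nu)$ are infinite dimensional, so $\Lambda_\mu$ and $\Lambda_\nu$ are countably infinite. Fix an integer $d\ge 3$, let $\mathcal T$ be the $d$-regular tree with its bipartition $V(\mathcal T)=V_0\sqcup V_1$ (both classes countably infinite), choose bijections $\phi_0\colon\Lambda_\mu\to V_0$ and $\phi_1\colon\Lambda_\nu\to V_1$, and set
\[\Lambda:=\{(a,c)\in\Lambda_\mu\times\Lambda_\nu:\ \phi_0(a)\ \text{adjacent to}\ \phi_1(c)\ \text{in}\ \mathcal T\}.\]
Then $\Lambda$ is countable and, by $d$-regularity, every $a\in\Lambda_\mu$ occurs in exactly $d$ pairs of $\Lambda$ and every $c\in\Lambda_\nu$ in exactly $d$ pairs.

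For the frame inequalities, given $(g,h)\in L^2(\mu)\oplus L^2(\nu)$ set $x=(\langle g,e_a\rangle_\mu)_{a\in\Lambda_\mu}$ and $y=(\langle h,e_c\rangle_\nu)_{c\in\Lambda_\nu}$, so that $A_\mu\|g\|^2\le\|x\|^2\le B_\mu\|g\|^2$ and $A_\nu\|h\|^2\le\|y\|^2\le B_\nu\|h\|^2$. Expanding $|x_a+y_c|^2$ and summing over $\Lambda$,
\[\sum_{(a,c)\in\Lambda}\bigl|\langle g,e_a\rangle_\mu+\langle h,e_c\rangle_\nu\bigr|^2=\sum_{(a,c)\in\Lambda}|x_a+y_c|^2=d\|x\|^2+d\|y\|^2+2\operatorname{Re}\langle Mx,y\rangle,\]
where $M\colon\ell^2(\Lambda_\mu)\to\ell^2(\Lambda_\nu)$ is (a copy of) the bipartite adjacency operator of $\mathcal T$. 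Since the adjacency operator of the $d$-regular tree has $\ell^2$-norm $2\sqrt{d-1}$ (Kesten), $\|M\|=2\sqrt{d-1}<d$, so
\[\bigl(d-2\sqrt{d-1}\bigr)\bigl(\|x\|^2+\|y\|^2\bigr)\ \le\ \sum_{(a,c)\in\Lambda}|x_a+y_c|^2\ \le\ \bigl(d+2\sqrt{d-1}\bigr)\bigl(\|x\|^2+\|y\|^2\bigr).\]
Together with the bounds for $\|x\|^2,\|y\|^2$ and with $\|F\|_{L^2(\rho)}^2=\|g\|^2+\|h\|^2$, this shows that $\Lambda$ is a frame spectrum for $\rho$, with bounds $(d-2\sqrt{d-1})\min(A_\mu,A_\nu)$ and $(d+2\sqrt{d-1})\max(B_\mu,B_\nu)$.

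The hard part is the cross term $\sum_{(a,c)\in\Lambda}\operatorname{Re}(x_a\overline{y_c})$. It cannot be made negligible by pushing the ``other'' frequencies far away: in a probability space every exponential has norm $1$, so any nonempty family of exponentials is Bessel with bound at least $1$, and there is no decay to exploit. What the construction delivers is cancellation rather than smallness — one needs the cross-term operator $M$ to have norm strictly below the common degree $d$, and that is exactly the non-amenability (expansion) of the regular tree; an amenable pattern, for instance a ``banded'' subset of $\Z\times\Z$, fails because its constant mode forces $\|M\|=d$ and kills the lower frame bound. Thus the real content is to recognize that the obstruction is purely combinatorial and to feed in an expander.
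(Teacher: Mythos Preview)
The paper itself does not prove this theorem; it is quoted from \cite{Lev2018} as a black box, so there is nothing in the present paper to compare your argument against.

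Assessed on its own, your argument is correct. The unitary identification $L^2(\rho)\cong L^2(\mu)\oplus L^2(\nu)$ via $F\mapsto\bigl(F(\cdot,0),F(0,\cdot)\bigr)$ is valid because continuity of $\mu$ and $\nu$ forces $\rho(\{(0,0)\})=0$, and an exponential with frequency $(a,c)$ indeed maps to the pair $(e_a,e_c)$. The combinatorial step is also sound: for $d\ge2$ the $d$-regular tree is bipartite with both colour classes countably infinite, each vertex has degree exactly $d$, and Kesten's theorem gives the bipartite adjacency block norm $\|M\|=2\sqrt{d-1}$. Hence $|2\operatorname{Re}\langle Mx,y\rangle|\le 2\sqrt{d-1}(\|x\|^2+\|y\|^2)$, which for $d\ge3$ is strictly below $d(\|x\|^2+\|y\|^2)$, and the frame bounds you state follow.

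One small slip, not in the proof proper but in the closing commentary: the sentence ``any nonempty family of exponentials is Bessel with bound at least $1$'' is false as written (take countably many copies of the constant function $1$). What you presumably mean --- and what is true --- is that any Bessel family of exponentials in $L^2$ of a probability measure has Bessel bound at least $1$, so there is no smallness to exploit and one must engineer cancellation. Rephrase or drop that remark; the proof itself stands.
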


Applying Theorem \ref{main thm 1} to the measure $\rho$ of ``mixed type" in Theorem \ref{thm:Nir lev}, we have the following corollary.
\begin{cor}
	Let $\mu, \nu, \rho$ be defined in Theorem \ref{thm:Nir lev}. Suppose that $\Lambda$ is a frame spectrum of $\rho$. Then we have $$\dim~\Lambda\le \min\{\overline{\dim}_e \mu, \overline{\dim}_e \nu  \}.$$
\end{cor}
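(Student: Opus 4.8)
The strategy is to apply Theorem~\ref{main thm 2} to $\rho$ with a restriction set $K$ chosen so that $\rho$ restricted to $K$ becomes a scaled copy of a product of $\mu$ with a Dirac mass, whose entropy dimension is exactly that of $\mu$. Since $\mu$ and $\nu$ enter the definition of $\rho$ symmetrically, it suffices to prove $\dim\Lambda\le\overline{\dim}_e\mu$; the bound involving $\nu$ then follows by interchanging the two factors.

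First I would choose $K$. As $\mu$ is a finite measure, the spheres $\partial B(0,\eta)\subset\R^n$ ($\eta>0$) are pairwise disjoint, so $\mu(\partial B(0,\eta))=0$ for all but countably many $\eta$, and since $\mu$ is continuous, $\mu(\overline{B(0,\eta)})\to\mu(\{0\})=0$ as $\eta\to 0$. Fix such an $\eta>0$, small enough that $\mu(\overline{B(0,\eta)})<1$, put $K_1=\R^n\setminus\overline{B(0,\eta)}$ and $K=K_1\times\R^m\subset\R^{n+m}$. Then $\partial K=\partial B(0,\eta)\times\R^m$, so $\rho(\partial K)=\mu(\partial B(0,\eta))=0$, while $\rho(K)=\mu(K_1)>0$ because $0\notin K_1$. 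The point of deleting a neighbourhood of the origin from the first factor is that $\delta_0\times\nu$ is carried by $\{0\}\times\R^m$, which is disjoint from $K$; hence $\rho_K=(\mu\times\delta_0)_K=\mu_{K_1}\times\delta_0$ as measures on $\R^{n+m}$, and its normalisation is $\widetilde{\mu_{K_1}}\times\delta_0$, where $\widetilde{\mu_{K_1}}=\mu(K_1)^{-1}\mu_{K_1}\in\mathcal{P}(\R^n)$.

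Next I would run the entropy computation. Since $\rho(\partial K)=0$, Lemma~\ref{lem:restriction} shows $\rho_K$ is frame spectral with spectrum $\Lambda$, hence so is its normalisation $\widetilde{\rho_K}=\widetilde{\mu_{K_1}}\times\delta_0$ by Lemma~\ref{lem:basic properties}(2). The Dirac mass $\delta_0$ on $\R^m$ lies in the single cube of $\mathcal{D}_n^{(m)}$ containing the origin, so for the product partition $\mathcal{D}_n^{(n+m)}=\mathcal{D}_n^{(n)}\times\mathcal{D}_n^{(m)}$ one has $H(\widetilde{\mu_{K_1}}\times\delta_0,\mathcal{D}_n^{(n+m)})=H(\widetilde{\mu_{K_1}},\mathcal{D}_n^{(n)})$ for every $n$, and therefore $\overline{\dim}_e(\widetilde{\mu_{K_1}}\times\delta_0)=\overline{\dim}_e\widetilde{\mu_{K_1}}$. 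Applying Theorem~\ref{main thm 1} to the probability measure $\widetilde{\rho_K}$ (equivalently, Theorem~\ref{main thm 2} to $\rho$ with this $K$) then gives $\dim\Lambda\le\overline{\dim}_e\widetilde{\mu_{K_1}}$.

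Finally I would let $\eta\to0$. A standard entropy estimate through the two-set partition $\{K_1,K_1^c\}$---subadditivity of entropy together with the conditional entropy identity---gives $\mu(K_1)\,H(\widetilde{\mu_{K_1}},\mathcal{D}_n)\le H(\mu,\mathcal{D}_n)+1$ for all $n$, hence $\overline{\dim}_e\widetilde{\mu_{K_1}}\le\mu(K_1)^{-1}\,\overline{\dim}_e\mu$. Choosing a sequence $\eta_j\downarrow 0$ admissible for the construction above, so that $\mu(K_1^{(j)})\to 1$, and combining with the previous step yields $\dim\Lambda\le\overline{\dim}_e\mu$; by symmetry $\dim\Lambda\le\overline{\dim}_e\nu$, and the corollary follows. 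The only point that requires care is this last step: restriction followed by renormalisation may a priori inflate the entropy dimension by the factor $\mu(K_1)^{-1}$, which is why the argument is run through balls shrinking to the origin rather than through a single restriction set; everything else is a routine identification of product measures and of their dyadic entropies.
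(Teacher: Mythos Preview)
Your proof is correct and matches the route the paper indicates. The paper offers no detailed argument beyond the phrase ``Applying Theorem~\ref{main thm 1}'', and your restriction to $K=K_1\times\R^m$ is precisely how Theorem~\ref{main thm 2} isolates the $\mu$-component of $\rho$ to produce the $\min$ bound; the limiting step $\eta\to 0$ to absorb the normalisation factor $\mu(K_1)^{-1}$ is a necessary detail that the paper leaves implicit. One minor sharpening: concavity of Shannon entropy already gives $\mu(K_1)\,H(\widetilde{\mu_{K_1}},\mathcal{D}_n)\le H(\mu,\mathcal{D}_n)$ without the extra $+1$, though either form suffices after dividing by $n$.
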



\section*{Acknowledgments} 
We would like to thank Chun-Kit Lai for many valuable remarks. We are also grateful for the anonymous referees for their valuable suggestions and comments. This work was partially supported by the Centre of Excellence in Analysis and Dynamics Research funded by the Academy of Finland.


\end{document}